\renewcommand{\leq}{\leqslant}
\renewcommand{\geq}{\geqslant}
\def\build#1_#2^#3{\mathrel{
\mathop{\kern 0pt#1}\limits_{#2}^{#3}}}
\definecolor{darkred}{rgb}{0.8, 0.0, 0.0}
\renewcommand{\leq}{\leqslant}
\renewcommand{\geq}{\geqslant}
\def\build#1_#2^#3{\mathrel{
\mathop{\kern 0pt#1}\limits_{#2}^{#3}}}
\newcommand{\E}{\mathcal{E}}
\newcommand{\Z}{{\mathbb{Z}}}
\newcommand{\LL}{{\mathbb{L}}}
\newcommand{\N}{{\mathbb{N}}}
\renewcommand{\L}{{\Lambda}}
\newcommand{\V}{{\mathcal{V}}}
\renewcommand{\P}{{\mathbb{P}}}
\newcommand{\xgt}{\Xi_{\Lambda,t,r}}
\newcommand{\prt}[1]{\left(#1\right)}
\theoremstyle{plain}
\newtheorem{thm}{Theorem}
\newtheorem{corollary}{Corollary}
\newtheorem{proposition}[corollary]{Proposition}
\newtheorem{lemma}{Lemma}
\theoremstyle{definition}
\newtheorem{remark}{Remark}
\begin{document}
\thispagestyle{empty}
\baselineskip=28pt
\vskip 5mm
\begin{center}
\LARGE{\textbf{Constrained-degree percolation  in  random environment}}
\end{center}
\baselineskip=14.5pt
\vskip 15mm

\begin{center}\large
R\'{e}my Sanchis\footnote{Departamento de Matem\'{a}tica, Universidade Federal de Minas Gerais, Brazil, \href{mailto:rsanchis@mat.ufmg.br}{rsanchis@mat.ufmg.br}; https://orcid.org/0000-0002-4472-3628} , Diogo C. dos Santos\footnote{Departamento  de Estat\'{\i}stica, Universidade Federal de Minas Gerais, Brazil, \href{mailto:diogoc@.ufmg.br}{diogoc@ufmg.br;} https://orcid.org/0000-0002-3345-6851
} 
and Roger W. C. Silva 
 \footnote{Corresponding Author - Departamento de Estat\'{\i}stica,
 Universidade Federal de Minas Gerais, Brazil, \href{mailto:rogerwcs@ufmg.br} {rogerwcs@ufmg.br;} https://orcid.org/0000-0002-6365-9211
}
\\ 

\vskip 10mm

\end{center}

\begin{abstract}

We consider the Constrained-degree percolation model in random environment on the square lattice. In this model,  each vertex $v$ has an independent random constraint ${\kappa}_v$ which takes the value $j\in \{0,1,2,3\}$ with probability $\rho_j$. Each edge $e$ attempts to open at a random uniform time $U_e$ in $[0,1]$, independently of all other edges. It succeeds if at time $U_e$ both  its end-vertices have degrees strictly smaller than their respectively attached constraints. We show that this model undergoes a non-trivial phase transition when $\rho_3$ is sufficiently large. The proof consists of a decoupling inequality, the continuity of the probability for local events, and a coarse-graining argument.

\end{abstract}
{\footnotesize Keywords: exponential decay of correlations; constrained-degree percolation, renormalization, continuity \\
MSC numbers:  60K35, 82B43}

\section{Introduction}

The Constrained-degree percolation model was introduced in \cite{Te}. It is a dependent continuous-time percolation process that evolves according to the following dynamics: after a  uniformly distributed random time, each edge attempts to open and it succeeds if at this random time both its end-vertices have degrees strictly smaller than some fixed integer number $\kappa$. Once an edge becomes open it remains open. In contrast to classical dynamical Bernoulli percolation, at any fixed time $t\in(0,1]$, the model exhibits dependencies of all orders. 
Other examples of models with infinite range dependence include Voronoi percolation \cite{BR}, Boolean percolation \cite{ATT}, and Majority percolation \cite{AM}. In such models, a decoupling technique must be carried out to understand the underlying structure of the model. Indeed, decoupling tools together with multiscale renormalization schemes constitutes a very powerful mechanism to approach dependent systems.

There is a growing literature on models with some type of local constraint.
For instance, in \cite{GJ} the Erd\"os-R\'enyi random graph $G_{n,p}$ is studied conditioned on the event that all vertex degrees belong to some subset $\mathcal{S}$ of the non-negative integers. Subject to a tricky hypothesis on $\mathcal{S}$, the authors obtain a condition for the existence (or not) of a giant component. Rigorous mathematical results on the so-called 1-2 Model, a statistical mechanics model on the hexagonal lattice in which the degree of each vertex is constrained to be 1 or 2, are reviewed in \cite{GL}. In \cite{Ga} the authors study Eulerian percolation on the square lattice, a classical Bernoulli bond percolation model with parameter $p$ conditioned on the fact that every site is of even degree. See \cite{HL,ZL} for other examples of constraint models.



Denote by $\mathbb{L}^2=(\mathbb{Z}^2,\E^2)$ the usual square lattice. In \cite{LSSST}, the authors show that the Constrained-degree percolation model on $\mathbb{L}^2$ admits a non-trivial phase transition when $\kappa_v=3$ for all $v\in\mathbb{Z}^2$. They also show that when $\kappa_v\leq2$ for all $v\in\mathbb{Z}^2$, there is no percolation, even at time 1. A natural question is the following: what happens if we allow some positive density of vertices to have constraints strictly lesser than three? Would the model still exhibit a non-trivial critical phenomenon?  

For instance, in \cite{FSZD} the authors simulate various scenarios for the Constrained-degree percolation process on $\mathbb{L}^2$. Their numerical results suggest that the model with uniform constraints 2 and 3 percolates at some time around 0.728. On the other hand, the model with uniform constraints 1, 2, and 3 does not percolate at any time. 

In recent years, special attention has been given to models where the underlying fixed structure of the graph is replaced by a random environment. Pioneering work in this direction is the paper of McCoy and Wu (see \cite{MW}), where the Ising model with impurities is introduced, allowing each row of vertical bonds to vary randomly from row to row with a prescribed probability function. The authors investigate the effect of random impurities on the phase transition of the model. Since the seminal paper \cite{MW}, the question of whether a random environment could affect the phase transition has been investigated in different contexts, including percolation \cite{HO,JMP}, oriented percolation \cite{KSV}, Ising model \cite{CK,CKP}, and the contact process \cite{BDS,K}. 

Motivated by this we introduce a variant of the model studied in \cite{LSSST}, allowing each vertex of the square lattice to receive a constraint value on the set $\{0,1,2,3\}$ according to an $i.i.d.$ law. Our main result is the existence of a non-trivial critical phenomenon for this model when the density of sites with constraint three is sufficiently close to one. The model is formally defined in the next section.

\subsection{The Model}\label{model}

We introduce the model on $\mathbb{L}^2=(\mathbb{Z}^2,\E^2)$.  Let $\kappa=\{\kappa_v\}_{v\in\mathbb{Z}^2}$ be a sequence of $i.i.d.$ random variables taking values $j\in\{0,1,2,3\}$ with probability $\rho_j$. The corresponding product measure is denoted by $\P_{\rho}$.

A temporal configuration is introduced with a sequence $\{U_e\}_{e\in\E^2}$ of {\em i.i.d.} uniform random variables in $[0,1]^{\E^2}$, independent of the sequence  $\kappa=\{\kappa_v\}_{v\in\mathbb{Z}^2}$ (sometimes we call $\{U_e\}_{e\in\E^2}$ a configuration of clocks).
We consider a continuous-time percolation configuration \begin{equation*}\omega_t:[0,1]^{\E^2}\times\{0,1,2,3\}^{\mathbb{Z}^2}\longrightarrow \{0,1\}^{\E^2},\end{equation*}
constructed as follows: at time $t=0$ all edges are closed. Denote by $\deg(v,t)$ the degree of vertex $v$ in $\omega_t$ (note that $\deg(v,0)=0$ for all $v\in\mathbb{Z}^2$).  Each edge $e=\langle u,v\rangle$ opens
 at time $U_e$ if both $\deg(u,U_e)<\kappa_{u}$ and $\deg(v,U_e)<\kappa_{v}$. Denote by $\omega_t(e)$ the configuration at edge $e$. We say $e$ is \textit{t-open} or \textit{t-closed} according to $\omega_t(e)=1$ or $\omega_t(e)=0$; also let $\P_{\rho,t}$ denote the law of $\omega_t$. Formally, we can express $\omega_t$ as the product of a collection of indicator functions, \textit{i.e.}, if $e=\langle u,v\rangle$, then
 \begin{equation*}
 \omega_t(e)=\mathbbm{1}_{\{U_e\leq t\}}\times\mathbbm{1}_{\{\deg(u,U_e)<\kappa_{u}\}}\times\mathbbm{1}_{\{\deg(v,U_e)<\kappa_{v}\}}.
\end{equation*}
Using Harris' construction  (see discussion after Theorem \ref{main}), it is straightforward  to show
that  $\omega_t$ is well defined for almost all sequences $U=\{U_e\}_{e\in\E^2}$ and $\kappa=\{\kappa_v\}_{v\in\mathbb{Z}^2}$ and all $t\in[0,1]$. 

Denote by $\mathbb{P}$ the product of Lebesgue measures governing the sequence $\{U_e\}_{e\in\E^2}$. It is clear that the bond percolation process defined above is governed by the pushforward product law
\begin{equation*}
\mathbb{P}_{\rho,t}(A)
=(\mathbb{P}\times\P_{\rho})(\omega_t^{-1}(A)),
\end{equation*}
 for any event $A\subset\{0,1\}^{\E^2}$.

Our main result is the following:
 
\begin{thm}\label{PhaseTransition}
Consider the Constrained-degree percolation model on $\mathbb{L}^2$ with $\rho=(\rho_0, \rho_1, \rho_2,\rho_3)$. There exists a constant  $\eta>0$ such that
\begin{equation*}
\mathbb{P}_{\rho,t}(0\longleftrightarrow\infty)>0,\,\,\,\,\,\,\,\,\, \forall\, (\rho_3,t)\in(1-\eta,1]^2.
\end{equation*}
\end{thm}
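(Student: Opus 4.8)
The plan is to establish percolation via a coarse-graining (renormalization) argument that compares the constrained-degree process at a suitable fixed time $t$ close to $1$ with a highly supercritical Bernoulli site percolation on a renormalized lattice. Fix a large integer $L$ and partition $\mathbb{Z}^2$ into disjoint $L\times L$ boxes indexed by a rescaled copy of $\mathbb{Z}^2$. Declare a renormalized site \emph{good} if, inside the associated box (suitably enlarged so that neighbouring good boxes overlap in the way needed to chain crossings), the $t$-open configuration contains a crossing cluster joining the four sides in the usual block-percolation sense, together with enough internal connectivity that two adjacent good blocks force their crossing clusters to merge. Standard renormalization then yields: if $\mathbb{P}_{\rho,t}(\text{a fixed block is good})$ is close enough to $1$, the good blocks percolate, and hence $0\longleftrightarrow\infty$ with positive probability in the original model.

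The key step is therefore to show that, for $L$ fixed and $(\rho_3,t)$ sufficiently close to $(1,1)$, the probability that a given block is good can be made arbitrarily close to $1$. I would proceed in two substeps. First, consider the \emph{reference model}: the pure $\kappa\equiv 3$ constrained-degree process at time $t=1$ (equivalently $\rho_3=1$, $t=1$). By the result of \cite{LSSST} this model percolates, and by a standard argument (FKG-type considerations and the fact that the block event is increasing in the relevant sense, or simply monotone convergence over an exhausting sequence of finite-time, finite-volume approximations) one may choose $L$ so large that the block is good in the reference model with probability exceeding any prescribed threshold $1-\varepsilon$. Second, I would transfer this to nearby parameters using the two ingredients the excerpt advertises: the \emph{continuity of the probability of local events} in $(\rho,t)$, and the \emph{decoupling inequality}. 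The block event — being a crossing event for $\omega_t$ restricted to a finite box — depends (through Harris' construction) only on the clocks $\{U_e\}$ for edges in a bounded region and on the constraints $\{\kappa_v\}$ in that region; continuity in $t$ is clear since a block crossing at time $t'<t$ is preserved at time $t$, and continuity/monotonicity in $\rho_3$ follows because with high $\P_\rho$-probability \emph{all} vertices in the (bounded) relevant region have $\kappa_v=3$, on which event the restricted process couples exactly to the reference process. Quantitatively, $\P_\rho(\exists\,v\in \text{enlarged block}:\kappa_v\neq 3)\le C L^2(1-\rho_3)\to 0$ as $\rho_3\to 1$, so the block-good probability for the random-environment model at time $t$ is within $CL^2(1-\rho_3)$ plus a $t\to 1$ error of that for the reference model. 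Choosing first $L$, then $t$ close to $1$, then $\rho_3$ close to $1$ makes the block-good probability exceed the renormalization threshold.

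The main obstacle I anticipate is the \emph{dependence structure}: the event ``block is good'' for different (even well-separated) renormalized sites is not independent, because $\omega_t$ has infinite-range dependencies, and because the constraints are shared through the degree interactions at block boundaries. This is precisely where the decoupling inequality from the excerpt is needed — one must show that block-good events localized to bounded regions become approximately independent (with an explicit, summable error) once the blocks are separated by a corridor whose width grows with $L$. One then runs the renormalization not against i.i.d. Bernoulli percolation directly, but against a $k$-dependent (or stochastically dominated) field, invoking the Liggett–Schonmann–Stacey-type domination by product measure once the marginal is close enough to $1$ and the dependence range is controlled. A secondary technical point is designing the block event so that it is genuinely \emph{local} (depends only on finitely many clocks), which requires care because the degree of a vertex at time $U_e$ can in principle be influenced by edges far away; one circumvents this by defining the block event in terms of an \emph{a priori lower bound} on connectivity that uses only edges inside the enlarged block and the worst-case (i.e. degree-$3$) behaviour there — monotonicity of Harris' construction guarantees that such a locally-witnessed crossing remains present in the full process. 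Once locality and decoupling are in hand, the rest is the standard Peierls/contour argument for the renormalized percolation, and the theorem follows for all $(\rho_3,t)\in(1-\eta,1]^2$ with $\eta$ determined by the chosen $L$ and separation.
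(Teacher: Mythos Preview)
Your overall architecture---coarse-graining, continuity in $(\rho,t)$, and a decoupling inequality to control the long-range dependence---matches the paper's, but the implementation diverges and your version has a genuine gap at the triggering step.

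The paper does \emph{not} run a single-scale block construction with primal crossing events and Liggett--Schonmann--Stacey domination. It works on the \emph{dual} side: for the annulus event $A_N=\{B_N^*\leftrightarrow\partial B_{2N}^*$ by a $t$-closed dual path$\}$ it proves a quantitative bound $\mathbb{P}_{\rho^\star,1}^*(A_N)\le c_7 N^2 c_6^N$ at the reference point $(\rho_3,t)=(1,1)$, by importing from \cite{LSSST} an entropy--energy estimate for closed dual paths (paths are counted by their number of straight sides, exploiting that in the $\kappa\equiv 3$ model a closed dual path cannot turn too often). Continuity (Theorem~\ref{continuidade}) transfers this to a neighbourhood of $(1,1)$ for a fixed initial scale $L_0$. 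Dependence is then handled by a \emph{multiscale} induction: with scales $L_{k+1}\approx L_k^{3/2}$, the decoupling inequality of Theorem~\ref{main} plus a union bound over pairs of sub-annuli gives $\mathbb{P}^*_{\rho,t}(L_k)\le L_k^{-4}\Rightarrow\mathbb{P}^*_{\rho,t}(L_{k+1})\le L_{k+1}^{-4}$, and a Peierls argument on dual circuits finishes.

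The gap in your route is the triggering. You claim that from ``the reference model percolates'' one obtains block-good probability close to $1$ via ``FKG-type considerations'' or ``monotonicity of Harris' construction''. Neither is available: the constrained-degree dynamics is \emph{not} monotone---opening an edge early can saturate a vertex and prevent other edges from opening---and the model is not known to satisfy FKG. In particular, a crossing witnessed by the dynamics restricted to a finite box need \emph{not} survive in the full process, so your proposed locality fix fails. Without positive association or RSW machinery, the passage from $\theta>0$ to ``box-crossing probability $\to 1$'' is not standard. The paper circumvents this entirely by extracting the needed quantitative input directly from the path-counting lemmas of \cite{LSSST}, which give exponential decay of closed dual connections at $(1,1)$ with no monotonicity hypothesis. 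Your LSS step is also shakier than stated (exponential decoupling is not finite-range dependence), though that could plausibly be repaired; the monotonicity issue is the essential obstruction.
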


\subsection{Organization}

We shall implement a renormalization scheme, which is done in two steps: the triggering argument and the induction step.  In Section \ref{decoup_sec}, we prove a decoupling inequality which will be useful for the induction step of the construction.  In Section \ref{contin_sec}, we show that $\mathbb{P}_{\rho,t}(A)$ is a continuous function of $(\rho,t)$ for any local event $A$, a fact that will be important for  proving the triggering argument. In Section \ref{main_thm} we use a coarse-graining argument to conclude the proof of Theorem \ref{PhaseTransition}.  We stress that the results of Section \ref{decoup_sec} and Section \ref{contin_sec} are valid in a more general context.  

\section{Decay of Correlations}\label{decoup_sec}
 
 In this section, we introduce the model on the hypercubic lattice $\mathbb{\mathbb{L}}^d=(\Z^d,\E^d)$. All the results and discussions in Section \ref{decoup_sec} and Section \ref{contin_sec} hold in this more general framework. In Section \ref{main_thm}, we return to the model on the square lattice $\LL^2$ to prove Theorem \ref{PhaseTransition}.
 
 Let $\kappa=\{\kappa_{v}\}_{v\in\Z^d}$ be a sequence of $i.i.d.$ random variables taking value $j\in\{0,1,\dots,2d-1\}$ with probability $\rho_j$.
The corresponding product measure is denoted by $\P_{\rho}$. As before, a temporal configuration is introduced with a sequence $\{U_e\}_{e\in\E^d}$ of $i.i.d.$ uniform random variables on $[0,1]^{\E^d}$ with corresponding product measure $\P$, independent of the sequence  $\kappa=\{\kappa_v\}_{v\in\Z^d}$.  The percolation configuration is denoted by 
\begin{equation*}\omega_t:[0,1]^{\E^d}\times\{0,1,\dots,2d-1\}^{\Z^d}\longrightarrow \{0,1\}^{\E^d},
\end{equation*} and constructed as follows: at time $t=0$ all edges are closed. Each edge $e=\langle u,v\rangle$ opens
 at time $U_e$ if both $\deg(u,U_e)<\kappa_{u}$ and $\deg(v,U_e)<\kappa_{v}$. Denote by $\omega_t(e)$ the percolation configuration at edge $e$ and by
\begin{equation*}\label{prod_meas}
\mathbb{P}_{\rho,t}(A)
=(\P\times \P_{\rho})(\omega_t^{-1}(A))
\end{equation*}
the law of $\omega_t$, for any event $A\subset\{0,1\}^{\E^d}$.

Let us introduce some notation. We denote by $\delta(u,v)$ the graph distance between vertices $u,v\in\Z^d$.  Given $\L,\tilde{\L}\subset \Z^d$, let $\delta(\tilde{\Lambda},\Lambda)=\inf\{\delta(x,y)\colon x\in\tilde{\Lambda},\, y\in\Lambda\}$ and write $B_{r}(\Lambda)=\{x\in\Z^d\colon \delta(x,\Lambda)\leq r\}$ for the ball of radius $r$ centered at $\Lambda$, $r\in\mathbb{N}$.
If $\Lambda$ is unitary, say $\Lambda=\{v\}$, we write $B_{r}(v)$ instead of $B_{r}(\{v\})$. Denote by $\partial\Lambda$, $\partial^s\Lambda$, and 
$\partial^e\Lambda$  the vertex, external vertex, and external edge boundaries of $\L$, respectively, that is,  
\begin{equation*}\partial\Lambda=\{u\in \Lambda\colon  \exists v\in {\Lambda}^c\mbox{ such that } \delta(u,v)=1\},
\end{equation*}
\begin{equation*}\partial^s\Lambda=\{u\in \Lambda^c\colon  \exists v\in {\Lambda}\mbox{ such that } \delta(u,v)=1\},
\end{equation*}
\begin{equation*}\partial^e\Lambda=\{\langle u,v\rangle\in\E^d\colon\ u\in \Lambda \mbox{ and } v\in {\Lambda}^c\}.
\end{equation*}

Given a collection of sites $\Gamma\subset\Z^d$,  we will use $\E(\Gamma)$ to denote the set of {\it edges} in the graph generated by $\Gamma$, i.e., the set of edges with both endpoints in $\Gamma$.

The restriction of a configuration $\omega$ to a set $K\subset\mathcal{E}^d$ is $\omega|_K=\{\omega(i)\colon i\in K\}.$ Define the cylinder set
$[\omega]_{K}=\{\omega'\colon\omega'|_K=\omega|_K\}.$
We say that the event $A$ \textit{lives on} a set $K$ if $\omega\in A$ and $\omega'\in [\omega]_{K}$ imply $\omega'\in A$.  Given $K\subset\E^d$, we write $\mathcal{H}_{K}=\{A\colon A\mbox{ lives on }K\}$. We say $A$ is a \emph{local event} if $A$ lives on some finite set $K$.

We will prove the  decoupling inequality below.

\begin{thm}\label{main}
Let $\Lambda_1$ and $\Lambda_2$ be finite disjoint subsets of $\Z^d$. Assume $A_1\in \mathcal{H}_{\E(\Lambda_1)}$ and $A_2\in \mathcal{H}_{\E(\Lambda_2)}$. There exists $\psi(d)>0$ such that
\begin{equation*}
  \left|   \mathbb{P}_{\rho,t}(A_1\cap A_2)-\mathbb{P}_{\rho,t}(A_1)\mathbb{P}_{\rho,t}(A_2)    \right|
     \leq c_3\left(|\partial\Lambda_1|+|\partial\Lambda_2|\right)e^{-\psi \delta(\Lambda_1,\Lambda_2)},
\end{equation*}
for some constant $c_3(d)>0$, and for all $\rho=(\rho_0,\dots,\rho_{2d-1})$ and $t\in[0,1]$.
\end{thm}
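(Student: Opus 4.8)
The plan is to exploit the fact that the state of an edge $e = \langle u,v\rangle$ at time $t$ in $\omega_t$ depends, through the Harris-type graphical construction, only on the clocks $U_f$ and constraints $\kappa_w$ inside a random region that has been explored by the dynamics up to time $t$. The key observation is a \emph{finite-speed-of-propagation} estimate: to determine $\omega_t|_{\E(\Lambda_1)}$ it suffices to know $U$ and $\kappa$ on a ball $B_r(\Lambda_1)$, provided that no ``chain of influence'' of length exceeding $r$ reaches $\Lambda_1$ before time $t$. Concretely, say a path $v_0, v_1, \dots, v_n$ in $\LL^d$ is \emph{time-increasing} if $U_{\langle v_{i-1},v_i\rangle}$ is decreasing (or increasing, depending on the bookkeeping) along the path; the value of $\omega_t$ on $\E(\Lambda_1)$ is measurable with respect to the clocks and constraints on $B_r(\Lambda_1)$ on the event that there is no time-increasing path of length $r$ started outside $B_r(\Lambda_1)$ and ending in $\Lambda_1$ with all clock values below $t \le 1$. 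Because the $U_e$ are i.i.d.\ uniform, a standard first-moment bound over self-avoiding paths shows that the probability of such a long time-increasing path decays exponentially in $r$: the number of such paths of length $r$ starting near $\partial\Lambda_1$ is at most $|\partial\Lambda_1|(2d)^r$, while the probability that a \emph{specific} path of length $r$ is time-increasing with all clocks below $1$ is exactly $1/r!$ (the probability that $r$ i.i.d.\ uniforms appear in a prescribed order), which beats $(2d)^r$ once $r$ is large; one can even afford the cruder bound $(2d)^r/\lfloor r/2\rfloor!$ by only requiring monotonicity along alternate edges, still superexponentially small.

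With this in hand, the argument runs as follows. Set $r = \lfloor \delta(\Lambda_1,\Lambda_2)/2 \rfloor$, so that $B_r(\Lambda_1)$ and $B_r(\Lambda_2)$ are disjoint. Let $G_i$ ($i=1,2$) be the ``good'' event that no time-increasing path of length $> r$ reaches $\Lambda_i$ from outside $B_r(\Lambda_i)$; by the previous paragraph, $\P(G_i^c) \le c(d)\,|\partial\Lambda_i|\, e^{-\psi(d) r}$ for suitable $\psi(d) > 0$, uniformly in $t \in [0,1]$ and in $\rho$ (the constraints only make it \emph{harder} to open edges, hence to propagate influence, so the bound holds for every $\rho$). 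On $G_i$, the indicator $\ind_{A_i}\circ\omega_t$ is measurable with respect to $\mathcal{F}_i := \sigma\big(U_f, \kappa_w : f \in \E(B_r(\Lambda_i)),\, w \in B_r(\Lambda_i)\big)$, and these two $\sigma$-algebras are independent under $\P\times\P_\rho$ since $B_r(\Lambda_1)\cap B_r(\Lambda_2)=\varnothing$. Writing $\widetilde{\ind_{A_i}}$ for a $\mathcal{F}_i$-measurable modification agreeing with $\ind_{A_i}\circ\omega_t$ on $G_i$ (e.g.\ the conditional expectation, or simply the value obtained by running the restricted dynamics), we get, by independence,
\begin{equation*}
\mathbb{E}\big[\widetilde{\ind_{A_1}}\,\widetilde{\ind_{A_2}}\big] = \mathbb{E}\big[\widetilde{\ind_{A_1}}\big]\,\mathbb{E}\big[\widetilde{\ind_{A_2}}\big],
\end{equation*}
and then $\mathbb{P}_{\rho,t}(A_1\cap A_2)$ and $\mathbb{P}_{\rho,t}(A_1)\mathbb{P}_{\rho,t}(A_2)$ each differ from the corresponding tilded quantity by at most $\P(G_1^c) + \P(G_2^c)$. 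Collecting the error terms yields
\begin{equation*}
\big| \mathbb{P}_{\rho,t}(A_1\cap A_2) - \mathbb{P}_{\rho,t}(A_1)\mathbb{P}_{\rho,t}(A_2) \big| \le c_3(d)\big(|\partial\Lambda_1| + |\partial\Lambda_2|\big) e^{-\psi(d)\,\delta(\Lambda_1,\Lambda_2)/2},
\end{equation*}
which is the claimed bound after relabeling $\psi$.

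The main obstacle I expect is making the finite-speed-of-propagation statement fully rigorous, i.e.\ pinning down precisely which clock/constraint data determine $\omega_t|_{\E(\Lambda_1)}$. The subtlety is that whether an edge opens depends on the \emph{degrees} of its endpoints at the clock time, which in turn depend on other edges having opened earlier, so the ``region of dependence'' is itself random and defined by a recursive exploration; one must argue that this exploration, started from $\E(\Lambda_1)$ and proceeding backward in time, stays within $B_r(\Lambda_1)$ unless a time-increasing (more precisely, time-\emph{decreasing} as we move outward from $\Lambda_1$) path of length $r$ exists. Care is needed because influence can propagate in both temporal directions a priori — an edge opening can both enable (by not being there, keeping a degree low… no: opening an edge only ever \emph{raises} degrees) — in fact monotonicity helps here: opening edges only increases degrees, so the obstruction structure is relatively clean, and the exploration need only follow edges whose clocks are smaller than the clock of the edge that triggered the query. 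Once this lemma is set up carefully, the rest is the routine Peierls-type path count and the independence argument sketched above. A secondary, minor point is ensuring all constants are uniform in $\rho$, which is immediate since the relevant large-deviation bound on long time-monotone paths involves only the uniform clocks $U_e$ and is monotone in the constraints.
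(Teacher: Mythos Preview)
Your proposal is correct and takes a genuinely different route from the paper's proof.

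The paper does \emph{not} use time-monotone paths. Instead it defines a coarser ``influence region'' $\mathcal{I}_t(\Lambda)$ by partitioning $[0,t]$ into $\leq 2d$ subintervals of length $1/2d$ and building layered clusters: first the Bernoulli-$(\leq 1/2d)$ cluster of $\Lambda$ using edges whose clocks ring in the last subinterval, then the cluster of that set using the next-to-last subinterval, and so on. Three lemmas then show that (i) each single-vertex influence region has bounded expected size, (ii) on the event $\{\mathcal{I}_t(\Lambda)\subset B_r(\Lambda)\}$ the configuration on $\E(\Lambda)$ is determined by data in $B_{r+1}(\Lambda)$, and (iii) the radius of $\mathcal{I}_t(\Lambda)$ has an exponential tail, via subcritical percolation estimates. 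The decoupling is then assembled exactly as you do in your last paragraph.

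Your approach is tighter and more direct: the backward exploration that actually determines $\omega_t|_{\E(\Lambda)}$ follows only edges whose clocks are \emph{strictly smaller} than that of the edge just queried, so the influence region is contained in the set of vertices reachable from $\Lambda$ by a nearest-neighbour walk with strictly decreasing edge-clocks. The $1/r!$ ordering probability then beats the $(2d)^r$ (or $(4d-2)^r$, if you count edge-adjacency walks) entropy by a super-exponential margin, giving an even stronger tail than the paper's $e^{-\psi r}$. This is the classical ``Harris graphical construction'' argument, familiar from interacting particle systems, and it works cleanly here precisely because, as you note, influence propagates only backward in clock-time. The paper's layered-cluster method is coarser but perhaps more modular (it reduces everything to standard subcritical Bernoulli bounds); your method is sharper and shorter once the exploration lemma is stated carefully. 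The one place to be precise is in confirming that the exploration region really is captured by vertex-walks with decreasing clocks: this follows by tracking the recursion through pairs $(\text{vertex},\text{time})$ and observing that removing consecutive repeated vertices from any branch of the recursion tree yields exactly such a walk.
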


Consider the following definitions related to the space of clocks: given $a,b\in[0,1]$ and a sequence of times $U=\{U_e\}_{e\in \E^d}$, denote by $C_{a,b}(x)$ the set of vertices connected to $x\in\Z^d$ by a path of edges whose clocks ring in the time interval $(a,b]$.
That is, $v\in C_{a,b}(x)$ if, and only if, there exists an alternating
sequence $x=x_0,e_0,x_1,e_1,\dots,e_{n-1},x_n=v$ of distinct vertices $x_i$ and edges $e_i=\langle x_i,x_{i+1}\rangle$ such that $a<U_{e_i}\leq b$. Note that $C_{a,b}(x)$ can be identified with the open cluster of vertex $x$ in an independent Bernoulli percolation model with parameter $b-a$. 
If $\Lambda\subset \Z^d$, we write $C_{a,b}(\Lambda)$ for the union of  clusters of vertices of $\Lambda$, \textit{i.e.}, 
$C_{a,b}(\Lambda)=\bigcup_{x\in \Lambda}C_{a,b}(x).$ Given $\Gamma\subset\Z^d$, we write $C_{a,b}^{\Gamma}(x)$ for the set of vertices connected to $x\in\Z^d$ by a path of edges whose clocks ring in the time interval $(a,b]$, and that uses only edges with both end-points in $\Gamma$. 

Let us briefly sketch the proof of Theorem \ref{main} in the particular case $t<p_c(\mathbb{L}^d)$, where $p_c(\mathbb{L}^d)$ is the critical threshold for independent Bernoulli bond percolation on $\mathbb{L}^d$. Given a finite set $\Lambda\subset \Z^d$, we will show (see Lemma \ref{independencia}) that on the event that $C_{0,t}(\Lambda)$ is confined to some ball of radius $r$, say $B_r(\L)$, the occurrence of any event that lives on $\Lambda$ depends only on the variables $U_e$ on edges of $\E(B_{r+1}(\Lambda))$ and the variables $\kappa_v$ with $v\in B_{r+1}(\Lambda)$. This fact and the exponential decay of the radius of $C_{0,t}(\Lambda)$ (which follows from a coupling with Bernoulli percolation of parameter $t$) gives us, with a little extra work, the necessary tools to prove the decoupling when $t<p_c(\Z^d)$. It turns out that this argument is no longer enough when $t\geq p_c(\mathbb{L}^d)$. In this case, pick $m\in\{1,...,2d\}$ such that  $t\in(\frac{m-1}{2d},\frac{m}{2d}]$ and write
\begin{equation}\label{influence}\mathcal{I}_t(\Lambda)= C_{0,\frac{1}{2d}}\left( C_{\frac{1}{2d},\frac{2}{2d}}\left(\ldots \left(C_{\frac{m-1}{2d},t}(\Lambda)\right) \right)\right).
\end{equation}

The set $\mathcal{I}_t(\L)$ is seen as a composition of clusters containing $\Lambda$. The first layer is the set of vertices connected to $\Lambda$ by a path of edges whose clocks ring between times $\frac{m-1}{2d}$ and $t$; the second layer is the set of vertices connected to the first layer by a path of edges whose clocks ring between times $\frac{m-2}{2d}$ and $\frac{m-1}{2d}$, and so on. The choice of intervals of length $1/2d$ is arbitrary, but $2d$ is a nice integer such that $1/2d < p_c(\Z^d)$, and so it gives the exponential tail decay of the radius of each layer in $\mathcal{I}_t(\Lambda)$.

 The random set in \eqref{influence} can be used to show that the model is well defined. In our case, the classical Harris' argument reduces to the observation that, for any $t\in[0,1]$, the state $\omega_t(e)$ of any edge $e\in\E^d$ is determined by checking only a finite number of random variables.  Indeed, Lemma \ref{prop} below says that $\mathcal{I}_t(\L)$ is almost surely finite and Lemma \ref{independencia} states that if $\mathcal{I}_t(\L)$ is limited to some ball $B_r(\L)$, then the occurrence of any event that lives on $\Lambda$ depends only on the variables $U_e$ on edges of $\E(B_{r+1}(\Lambda))$ and the variables $\kappa_v$ with $v\in B_{r+1}(\Lambda)$ (the careful reader will notice that set of edges and vertices whose clocks and constraints actually affect the occurrence of any event living on $\L$ is a subset of such ball). Therefore, to determine the state of each edge at any time, it is enough to check the configuration of clocks and constraints in some almost surely finite region.  Lemma \ref{exponential_decay} below gives quantitative estimates for the size of such a region.

In the remainder of this section, we present the three lemmas that will help us to prove Theorem \ref{main} and their proofs. In what follows, $\mathbb{E}(\cdot)$ denotes the expectation with respect to the product measure $\mathbb{P}$ on the space of clocks.
\begin{lemma}\label{prop} There exists a constant $c_1=c_1(d)<\infty$ such that, for all $v\in\Z^d$,
\begin{equation*}
\sup_{t\in[0,1]}\mathbb{E}\left[|\mathcal{I}_t(v)|\right]\leq c_1.
\end{equation*}
\end{lemma}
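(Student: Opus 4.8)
The plan is to bound $\mathbb{E}[|\mathcal{I}_t(v)|]$ uniformly in $t$ by exploiting the layered structure of $\mathcal{I}_t(v)$ together with the fact that each layer is a subcritical Bernoulli cluster. Recall from \eqref{influence} that $\mathcal{I}_t(v)$ is obtained by applying at most $2d$ successive cluster operations $C_{\frac{i-1}{2d},\frac{i}{2d}}(\cdot)$, each of which corresponds to Bernoulli bond percolation with parameter $\frac{1}{2d}<p_c(\Z^d)$. Since enlarging the time interval only enlarges the set, we have the deterministic domination $\mathcal{I}_t(v)\subseteq C_{0,\frac{1}{2d}}(C_{\frac{1}{2d},\frac{2}{2d}}(\cdots(C_{\frac{2d-1}{2d},1}(v))))=:\mathcal{I}_1(v)$ for every $t\in[0,1]$, so it suffices to bound $\mathbb{E}[|\mathcal{I}_1(v)|]$, a quantity not depending on $t$.

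First I would set up a single-step estimate. For a \emph{fixed} finite set $\Lambda$, the cluster $C_{a,b}(\Lambda)$ with $b-a<p_c(\Z^d)$ satisfies $\mathbb{E}[|C_{a,b}(\Lambda)|]\le |\Lambda|\cdot\chi(b-a)$, where $\chi(p)=\mathbb{E}_p[|C(0)|]<\infty$ is the expected cluster size in subcritical Bernoulli percolation (Aizenman--Newman / Menshikov). Setting $\alpha:=\chi(1/2d)<\infty$, this reads $\mathbb{E}[|C_{a,b}(\Lambda)|]\le \alpha|\Lambda|$. The subtlety is that in $\mathcal{I}_1(v)$ the set fed into the outer operation is itself random; but the $2d$ collections of clocks $\{U_e: \tfrac{i-1}{2d}<U_e\le\tfrac{i}{2d}\}$, $i=1,\dots,2d$, are \emph{independent}, so I can condition on the inner layers and apply the single-step bound to the outermost operation with $\Lambda$ the (random, but independent of the outer clocks) inner set. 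Iterating — conditioning from the innermost layer outward and taking expectations at each stage — gives $\mathbb{E}[|\mathcal{I}_1(v)|]\le \alpha^{2d}$, and one takes $c_1(d)=\alpha^{2d}=\chi(1/2d)^{2d}$.

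The main technical point to get right is the conditioning order and the independence bookkeeping: one must verify that when peeling off the $i$-th operation, the already-constructed inner set depends only on clocks in the intervals $(\frac{j-1}{2d},\frac{j}{2d}]$ for $j>i$ (using the convention in \eqref{influence}, actually $j<i$ after relabeling — I would fix the indexing carefully), which are independent of the clocks in $(\frac{i-1}{2d},\frac{i}{2d}]$ used by the $i$-th operation. Given this, the tower property yields the product bound cleanly. A minor point is that $\chi(p)<\infty$ for $p<p_c$ is exactly the sharpness of the phase transition for Bernoulli percolation, which I would simply cite. I expect no serious obstacle beyond the indexing care; the heart of the matter is just ``subcritical cluster of a set of size $n$ has expected size $\le \alpha n$,'' applied $2d$ times.
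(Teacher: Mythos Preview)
Your reduction to $t=1$ and the layer-by-layer peeling strategy are exactly what the paper does. The gap is in your independence claim: the collections of indicators $\big(\mathbb{1}\{U_e\in(\tfrac{i-1}{2d},\tfrac{i}{2d}]\}\big)_{e}$ for different $i$ are \emph{not} independent, because for each fixed edge $e$ these $2d$ indicators are mutually exclusive. Concretely, if the inner set built from the later intervals equals $\Gamma$, then every boundary edge $f\in\partial^e\Gamma$ is forced to have $U_f$ outside the interval just used, which \emph{raises} the conditional probability that $U_f$ lands in the next (outer) interval. So conditionally on the inner set, the outer layer is not a fresh Bernoulli$(\tfrac{1}{2d})$ percolation on $\partial^e\Gamma$, and your tower-property step $\mathbb{E}[|C_{a,b}(\Lambda)|\mid\Lambda]\le\alpha|\Lambda|$ does not follow from the unconditional bound $\mathbb{E}[|C_{a,b}(\Lambda)|]\le\alpha|\Lambda|$ for deterministic $\Lambda$.

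The paper confronts precisely this issue. It conditions on $\{\mathcal{D}_1(v)=\Gamma\}$, notes explicitly that this event carries information about the clocks on $\E(\Gamma)\cup\partial^e\Gamma$ (but not on $\E(\Gamma^c)$), and then dominates by the worst case in which \emph{all} edges of $\partial^e\Gamma$ ring in $(0,\tfrac{1}{2d}]$. Under that worst case one bounds $|C_{0,1/2d}(\partial\Gamma)|$ by $|\partial\Gamma|+|C_{0,1/2d}^{\Gamma^c}(\partial^s\Gamma)|$, and the latter cluster lives on $\E(\Gamma^c)$ and is therefore genuinely independent of the conditioning. This yields a multiplicative constant $4d(d+1)$ per layer (via the elementary path-counting bound $\mathbb{E}|C_{0,1/2d}(u)|\le 1+2d$, so no appeal to sharpness is needed), and iterating gives $c_1=[4d(d+1)]^{2d}$. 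Your argument becomes correct once you insert this boundary-edge worst-case step in place of the false independence claim.
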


\begin{proof} 

Since $\mathcal{I}_t(v)\subset \mathcal{I}_1(v)$ for all $t\in[0,1]$, it suffices to show that $\mathbb{E}\left[|\mathcal{I}_1(v)|\right]\leq c_1$ for all $v\in\Z^d$.  
Define the auxiliary sets
\begin{equation*}\mathcal{D}_j(v)=C_{\frac{j}{2d},\frac{j+1}{2d}}\left(\ldots\left( C_{\frac{2d-1}{2d},1}(v)\right)\right),\,\,\,j\in\{0,1,\dots,2d-1\}.
\end{equation*}
Note that $\mathcal{D}_1(v)$ is obtained from $\mathcal{I}_1(v)$ by deleting the outermost cluster of $\mathcal{I}_1(v)$.
For each $v\in \Z^d$, it holds 
\begin{align*}
\mathbb{E}[|\mathcal{I}_1(v)|]&=\mathbb{E}\left[\mathbb{E}\left[|\mathcal{I}_1(v)|\middle| \mathcal{D}_1(v)\right]\right]=\mathbb{E}\left[\mathbb{E}\left[|C_{0,\frac{1}{2d}}\left(\partial\mathcal{D}_1(v)\right)|\middle|\mathcal{D}_1(v)\right]\right].
\end{align*}

For a given $\Gamma\subset\Z^d$, observe that the event $\{D_1(v)=\Gamma\}$ gives information on the clocks of edges in $\E(\Gamma)\cup\partial^e\Gamma$, but is independent of the clocks on edges in $\E(\Gamma^c)$. 
Consider the random sets 
\begin{equation}\label{fronteira}X_i(\Gamma)=\left\{e\in \partial^e\Gamma: U_e\in \left(\frac{i}{2d},\frac{i+1}{2d}\right]\right\},\,\, i=0,1.\dots,2d-1,
\end{equation}
and note that, for fixed $\Gamma$, the function $\mathbb{E}\left[|C_{0,\frac{1}{2d}}(\partial\Gamma)|\middle|\mathcal{D}_1(v)=\Gamma;X_0(\Gamma)\right]$ is maximized when all the clocks on edges of $\partial^e\Gamma$ ring in $(0,1/2d)$. Hence,
\begin{align*} \mathbb{E}\left[\mathbb{E}\left[|C_{0,\frac{1}{2d}}(\partial\Gamma)|\middle|\mathcal{D}_1(v)=\Gamma;X_0(\Gamma)\right]\right]&\leq \mathbb{E}\left[|C_{0,\frac{1}{2d}}(\partial\Gamma)|\middle|\mathcal{D}_1(v)=\Gamma; X_0(\Gamma)=\partial^e\Gamma\right]\\&
\leq \mathbb{E}\left[|\partial\Gamma|+|C_{0,\frac{1}{2d}}^{\Gamma^c}(\partial^s\Gamma)|\middle|\mathcal{D}_1(v)=\Gamma; X_0(\Gamma)=\partial^e\Gamma\right]\\
&= |\partial\Gamma|+\mathbb{E}\left[|C_{0,\frac{1}{2d}}^{\Gamma^c}(\partial^s\Gamma)|\right]\leq |\partial\Gamma|+\mathbb{E}\left[|C_{0,\frac{1}{2d}}(\partial^s\Gamma)|\right].
\end{align*}

A classical counting argument in percolation gives $\mathbb{E}\left[|C_{0,\frac{1}{2d}}(u)|\right]\leq (1+2d)$, for any $u\in\Z^d$. This leads to
\begin{equation*}\mathbb{E}\left[|C_{0,\frac{1}{2d}}(\Gamma)|\middle|\mathcal{D}_1(v)=\Gamma\right]\leq |\partial\Gamma|+|\partial^s\Gamma|(1+2d)\leq |\partial\Gamma|(1+2d(1+2d))\leq 4d(d+1)|\Gamma|,
\end{equation*}
and we obtain 
\begin{equation}\label{exp_bound}
\mathbb{E}[|\mathcal{I}_1(v)|]\leq 4d(1+d)\mathbb{E}[|\mathcal{D}_1(v)|].
\end{equation}

Consider the events $\{\mathcal{D}_j(v)=\Gamma\}$, for some deterministic set $\Gamma$. If we condition on the event that all edges in $\partial^e\Gamma$ ring in the interval $(\frac{j}{2d},\frac{j+1}{2d})$, then the same reasoning leading to \eqref{exp_bound}  yields 
\begin{equation*}\mathbb{E}[|\mathcal{D}_j(v)|]\leq 4d(1+d)\mathbb{E}[|\mathcal{D}_{j+1}(v)|],\,\,\,j\in\{1,\dots,2d-2\}.
\end{equation*}
Therefore, 
\begin{equation*}\mathbb{E}[|\mathcal{I}_1(v)|]\leq [4d(1+d)]^{2d}=c_1(d),
\end{equation*}
and the proof is complete.

\end{proof}

For fixed $\L\subset\Z^d$, $t\in[0,1]$ and $r\in \N$, define the clock event
\begin{equation*}\Xi_{\Lambda,t,r}=\left\{U\colon \mathcal{I}_t(\Lambda)\subset B_{r}(\Lambda) \right\}.
\end{equation*}
When $t=1$ we write $\Xi_{\Lambda,r}$ instead of $\Xi_{\Lambda,1,r}$. In what follows, we abuse notation and write $\omega_t^{-1}(A)\cap\Xi_{\Lambda,t,r}$ for the set of all sequences $\kappa=\{\kappa_v\}_{v\in\Z^d}$ and $U=\{U_e\}_{e\in\mathcal{E}^d}$ that induce the occurrence of $A$ and such that $U\in\Xi_{\Lambda,t,r}$.


\begin{lemma}\label{independencia}
Let $\Lambda\subset \Z^d$ be a finite set. If $A\in\mathcal{H}_{\E(\L)}$, then the event 
$\omega_t^{-1}(A)\cap\Xi_{\Lambda,t,r}$ depends only on the collection of random variables $U_e$ with $e\in \E(B_{r+1}(\Lambda))$ and the random variables $\kappa_v$ with ${v\in B_{r+1}(\L)}$.
\end{lemma}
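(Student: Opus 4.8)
The plan is to show that, on the clock event $\xgt$, the entire history that determines the opening of edges inside $\E(\L)$ is contained in $\E(B_{r+1}(\L))$, and that this history only reads the clocks $U_e$ with $e\in\E(B_{r+1}(\L))$ and the constraints $\kappa_v$ with $v\in B_{r+1}(\L)$. Since $A$ lives on $\E(\L)$, it suffices to argue that for two clock-constraint configurations agreeing on this finite collection of variables, either both lie in $\omega_t^{-1}(A)\cap\xgt$ or neither does. So fix two pairs $(U,\kappa)$ and $(U',\kappa')$ with $U_e=U'_e$ for all $e\in\E(B_{r+1}(\L))$ and $\kappa_v=\kappa'_v$ for all $v\in B_{r+1}(\L)$, and suppose $U\in\xgt$; I must show $U'\in\xgt$ and $\omega_t(e)=\omega'_t(e)$ for every $e\in\E(\L)$.

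First I would handle the clock event itself: the set $\mathcal{I}_t(\L)$ is built as an iterated composition of $C_{a,b}(\cdot)$-clusters starting from $\L$, and by \eqref{influence} each successive layer only ever adds vertices adjacent to the previous layer through an edge whose clock rings in the relevant subinterval. On the event $\xgt$ we have $\mathcal{I}_t(\L)\subset B_r(\L)$, so the clock event $\{U:\mathcal{I}_t(\L)\subset B_r(\L)\}$ is itself determined by the clocks of edges with at least one endpoint in $B_r(\L)$, hence by edges in $\E(B_{r+1}(\L))$; this gives $U'\in\xgt$ and moreover $\mathcal{I}_t(\L)$ is the same set under $U$ and $U'$.

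Second, the core step: I claim that for every edge $e$ with both endpoints in $\mathcal{I}_t(\L)$, the state $\omega_s(e)$ for all $s\le t$ is a function only of the clocks of edges inside $\E(\mathcal{I}_t(\L)\cup\partial^s\mathcal{I}_t(\L))$ and the constraints of vertices in $\mathcal{I}_t(\L)\cup\partial^s\mathcal{I}_t(\L)$. The mechanism is that $\mathcal{I}_t(\L)$ is, by construction, "closed" under the dynamics up to time $t$: an edge $f=\langle x,y\rangle$ with $x\in\mathcal{I}_t(\L)$ and $y\notin\mathcal{I}_t(\L)$ cannot open before time $t$, because if it did then $U_f\le t$ would place $f$ in one of the time-layers defining $\mathcal{I}_t(\L)$ (the layer whose interval contains $U_f$), forcing $y\in\mathcal{I}_t(\L)$, a contradiction — here one uses that the layers of $\mathcal{I}_t(\L)$ are indexed by intervals that together cover $(0,t]$. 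Consequently no edge leaving $\mathcal{I}_t(\L)$ ever opens up to time $t$, so $\deg(v,s)$ for $v\in\mathcal{I}_t(\L)$ and $s\le t$ counts only edges inside $\mathcal{I}_t(\L)$, and the recursive rule $\omega_s(e)=\mathbbm 1_{\{U_e\le s\}}\mathbbm 1_{\{\deg(u,U_e)<\kappa_u\}}\mathbbm 1_{\{\deg(v,U_e)<\kappa_v\}}$ can be resolved by induction on the ordering of clock times inside $\E(\mathcal{I}_t(\L))$, reading only the constraints of endpoints of those edges. Since $\mathcal{I}_t(\L)\supset\L$ and $\mathcal{I}_t(\L)\cup\partial^s\mathcal{I}_t(\L)\subset B_{r+1}(\L)$ on $\xgt$, the states $\omega_t(e)$, $e\in\E(\L)$, agree under $(U,\kappa)$ and $(U',\kappa')$, and the proof is complete.

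The main obstacle is the second step — establishing rigorously that $\mathcal{I}_t(\L)$ is dynamically closed, i.e.\ that no edge on its outer boundary opens before time $t$. This requires matching the time-layer decomposition in \eqref{influence} against the opening times: one must check that every edge with an endpoint added at layer $j$ has its clock in the union of intervals $(\tfrac{m-j}{2d},\ldots]$ used up to that layer, and that an edge whose clock rings in $(a,b]\subset(0,t]$ but which leaves $\mathcal{I}_t(\L)$ would have to be included when forming the cluster $C_{a,b}(\cdot)$ at the corresponding layer. Once this combinatorial bookkeeping is pinned down, the degree-counting and the induction on clock-orderings are routine.
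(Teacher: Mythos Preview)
Your overall strategy matches the paper's: fix two configurations agreeing on the variables indexed by $B_{r+1}(\L)$, verify that $\xgt$ (and the set $\mathcal{I}_t(\L)$ itself) is unchanged, then argue by induction on clock order that the edge states agree on $\E(\L)$. The first step is fine. The genuine gap is in your second step, where you assert that $\mathcal{I}_t(\L)$ is ``dynamically closed'', i.e.\ that no boundary edge $f=\langle x,y\rangle$ (with $x\in\mathcal{I}_t(\L)$, $y\notin\mathcal{I}_t(\L)$) opens before time $t$. Your justification---that $U_f\le t$ places $U_f$ in one of the layer intervals and hence forces $y\in\mathcal{I}_t(\L)$---is incorrect. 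The layers in \eqref{influence} are nested and built from the inside out: writing $\mathcal{D}_m=\L$ and $\mathcal{D}_{j}=C_{j/2d,(j+1)/2d}(\mathcal{D}_{j+1})$ (with the innermost interval $((m-1)/2d,t]$), membership $y\in\mathcal{D}_j$ requires a $(j/2d,(j+1)/2d]$-clock path from $y$ to $\mathcal{D}_{j+1}$, not merely to some vertex of $\mathcal{D}_0=\mathcal{I}_t(\L)$. If $x\in\mathcal{D}_0\setminus\mathcal{D}_1$ (so $x$ is reached from $\mathcal{D}_1$ only through a $(0,1/2d]$-clock edge) and $U_f\in(1/2d,t]$, then $f$ provides no such path and one can arrange $y\notin\mathcal{I}_t(\L)$. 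Such an $f$ can open (e.g.\ when all constraints equal $2d-1$), so $\mathcal{I}_t(\L)$ is \emph{not} dynamically closed in your sense, and $\deg(x,\cdot)$ may genuinely depend on outside data.

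What actually makes the lemma go through is a weaker fact, tied to $A$ living on $\E(\L)$ rather than on all of $\E(\mathcal{I}_t(\L))$: any path of \emph{strictly decreasing} clocks starting from a vertex of $\L$, with first clock at most $t$, stays inside $\mathcal{I}_t(\L)$. Indeed, along such a path the interval indices of the clocks are non-increasing; grouping consecutive edges with the same index and using $\L\subset\mathcal{D}_{j+1}$ for every $j$, one shows inductively that each block ends in the corresponding $\mathcal{D}_j$. Since the recursion determining $\omega_t(e)$ for $e\in\E(\L)$ only ever follows such decreasing-clock paths---each degree computation passes to incident edges with strictly earlier clocks---this is exactly what is needed. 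The paper packages this as an induction on the clock-ordered enumeration $e_1,\dots,e_m$ of edges in $\E(\mathcal{I}_t(\L))$, the operative step being that for $e_i$ so reached, any incident $f$ with $U_f<U_{e_i}$ extends the decreasing path and hence has its other endpoint in $\mathcal{I}_t(\L)$. Your ``bookkeeping'' paragraph is aiming at this ingredient but misidentifies the mechanism.
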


\begin{proof}
Fix $(\kappa,U)\in\omega_t^{-1}(A)\cap \xgt$. Given a collection of constraints $\Tilde{\kappa}\in\{0,\dots,2d-1\}^{\Z^2}$ and clocks $\Tilde{U}\in[0,1]^{\mathcal{E}^2},$ we consider, with some abuse of notation, the new configurations 
\begin{equation*}(\kappa\times\tilde{\kappa})_v=\left\{\begin{array}{ll}
\kappa_v&\mbox{if}\quad v\in  B_{r+1}(\L)\,,\\
\tilde{\kappa}_v &\mbox{if}\quad v\notin  B_{r+1}(\L),
\end{array}\right.
\end{equation*}
and
\begin{equation*}(U\times\tilde{U})_e=\left\{\begin{array}{ll}
U_e&\mbox{if}\quad e\in \E(B_{r+1}(\Lambda)) ,\\
\tilde{U}_e &\mbox{if}\quad e\notin \E(B_{r+1}(\Lambda)) .
\end{array}\right.
\end{equation*}

Denote by $\tilde{\omega}_t$ the percolation configuration at $\kappa\times\tilde{\kappa}$ and $U\times\tilde{U}$ and by $\omega_t$ the configuration at $\kappa$ and $U$. The first thing to observe is that $U\times\Tilde{U}\in\xgt$. Indeed, let $f\in\partial^e\mathcal{I}_t(\Lambda)$ and observe that $U_f>U_e$, for all $e\in \E(\mathcal{I}_t(\Lambda))$ sharing a vertex with $f$; otherwise $f$ would be in $\E(\mathcal{I}_t(\Lambda))$. Besides, since $U\in\xgt$, all edges on $\partial^e\mathcal{I}_t(\Lambda)$ necessarily have both  end-vertices inside the ball $B_{r+1}(\Lambda)$. This prevents that any change on the clocks outside $\E(B_{r+1}(\Lambda))$ interferes in the occurrence of the event $\xgt$. It remains to show that
\begin{equation}\label{O_ponto}
\tilde{\omega}_t|_{\Lambda}=\omega_t|_{\Lambda}.
\end{equation}

We will actually prove the stronger claim that $\omega_t(e)=\tilde{\omega}_t(e)$ for all $e\in\E(\mathcal{I}_t(\Lambda))$. This fact clearly gives the equality on $\eqref{O_ponto}$. Let $\{e_1,\ldots,e_m\}$ be an enumeration
of the edges with both endpoints in $\mathcal{I}_t(\Lambda)$ such that $U_{e_1}<U_{e_2}<\ldots<U_{e_m}$, and
denote $e_i=\langle x_i,y_i\rangle$. Note that $(U\times\tilde{U})_{e_1}=U_{e_1}<U_{f}=(U\times\tilde{U})_{f}$ 
for all $f\sim e_1$. This follows because if $f\in\E(\mathcal{I}_t(\L))$, then its clock must have ring after $U_{e_1}$. If $f\notin\E(\mathcal{I}_t(\L))$, then by the argument on the last paragraph we must have $U_f>U_{e_1}$. Hence $\tilde{\omega}_t(e_1)=\omega_t(e_1)=1$, unless $\kappa(x_1)=0$ or $\kappa(y_1)=0$, in which case $\tilde{\omega}_t(e_1)=\omega_t(e_1)=0$.

Assume that $\tilde{\omega_t}(e_k)={\omega_t}(e_k)$, $k=1,\dots,i-1$. Since $e_i=\langle x_i,y_i\rangle\in \E(B_{r+1}(\Lambda))$, 
it holds that $(U\times\tilde{U})_{e_i}=U_{e_i}$. Let $s=U_{e_i}$. Consider the set $H$ of edges incident to vertex $x_i$ and observe that $H\subset \E(B_{r+1}(\Lambda))$. Thus, $(U\times \tilde{U})_f=U_f$ for all $f\in H$.   We can partition $H$ into two disjoint sets $H_1$ and $H_2$ in such a way that $f\in H_1$ if $U_f<s$ and $f\in H_2$ if $U_f>s$. If $f\in H_1$, then necessarily $f\in\E(\mathcal{I}_t(\Lambda))$ and thus, by the induction hypothesis, we have $\tilde{\omega}_s(f)=\omega_s(f)$. If $f\in H_2$,
then its clock rings after time $s$. Therefore, it is closed at time $s$, and
hence $\tilde{\omega}_s(f)=\omega_s(f)=0$. The same argument works for $y_i$, and we have shown that the degrees of $x_i$ and $y_i$ are the same on both configurations by time $s$, that is, $\tilde{\omega}_t(e_i)=\omega_t(e_i)$.

\end{proof}

We will need the following definition: given a set $\Lambda\subset\Z^d$ and a vertex $v\in\Lambda$, write 
\begin{equation*}rad_v(\Lambda)=\sup_{x\in \Lambda} \delta(x,v).
\end{equation*}

It is clear that $rad_v(\mathcal{I}_t(v))\leq r$ if, and only if, $\mathcal{I}_t(v) \subset B_r(v)$. Next result tell us that the probability of $\Xi_{\Lambda,t,r}^c$  decays exponentially fast when $r\rightarrow \infty$.

\begin{lemma}\label{exponential_decay}
Let $\Lambda\subset\Z^d$ be a finite set. There are constants $c_2(d)<\infty$ and $\psi(d)>0$ such that, for all $t\in[0,1]$, 
\begin{equation*}\mathbb{P}(\xgt^{c})\leq c_2|\partial\Lambda|e^{-4{\psi(d)} r},\,\,\mbox{ for all } r>0.
\end{equation*}
\end{lemma}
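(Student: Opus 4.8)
The plan is to reduce the statement to a known fact about subcritical Bernoulli percolation: the exponential decay of the radius of an open cluster. Recall that $\mathcal{I}_t(\Lambda)$ is, by construction, a composition of at most $2d$ ``layers'', where each layer is a union of Bernoulli clusters with parameter $1/2d < p_c(\Z^d)$ sitting on top of the previous layer. First I would treat the single-vertex case $\Lambda=\{v\}$, so that $\mathcal{I}_t(v)$ is a nested composition $C_{0,1/2d}(C_{1/2d,2/2d}(\cdots(C_{(m-1)/2d,t}(v))))$ with $m\leq 2d$. The event $\Xi_{\{v\},t,r}^c=\{rad_v(\mathcal{I}_t(v))>r\}$ forces at least one of the layers to reach graph distance at least $r/(2d)$ from the support of the previous layer (since $m$ layers with radii summing to more than $r$ must contain one of radius exceeding $r/(2d)$).

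The key step is therefore a union bound over the layers together with the spatial Markov property of the layered construction. Conditionally on the inner layers being equal to some fixed finite set $\Gamma$, the outer layer $C_{a,b}(\Gamma)$ is (stochastically dominated by, via the argument already used in the proof of Lemma \ref{prop}) a union of $|\partial^s\Gamma|+|\partial\Gamma|$ independent Bernoulli$(1/2d)$ clusters; by the standard subcritical estimate $\mathbb{P}(rad_u(C_{a,b}(u))\geq k)\leq c\,e^{-\psi' k}$ for some $\psi'=\psi'(d)>0$ (e.g.\ Menshikov/Aizenman--Barsky, or a crude Peierls-type count since $1/2d$ is well below $p_c$ and indeed below the connective-constant threshold), each of these clusters has radius exceeding $r/(2d)$ with probability at most $c\,e^{-\psi' r/(2d)}$. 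Summing over the at most $2d$ choices of which layer is the offending one, and controlling $\sum_j \mathbb{E}[|\partial\Gamma_j|]$ by the bound $\mathbb{E}[|\mathcal{I}_1(v)|]\leq c_1$ from Lemma \ref{prop}, I would obtain $\mathbb{P}(\Xi_{\{v\},t,r}^c)\leq c'\,e^{-\psi r}$ uniformly in $t\in[0,1]$, after renaming constants so that the exponent is $4\psi(d)r$ with $\psi(d)$ the same constant appearing in Theorem \ref{main}.

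To pass from a single vertex to a finite set $\Lambda$, note that $\Xi_{\Lambda,t,r}^c=\{\mathcal{I}_t(\Lambda)\not\subset B_r(\Lambda)\}$ and $\mathcal{I}_t(\Lambda)=\bigcup$ of the layered clusters started from $\Lambda$; if $\mathcal{I}_t(\Lambda)$ escapes $B_r(\Lambda)$ then some vertex of $\Lambda$ (in fact of $\partial\Lambda$, since only boundary vertices can reach distance $r$ while staying closest to an interior vertex) has its own influence set $\mathcal{I}_t(v)$ reaching distance at least $r$ from $v$ is not quite right---more carefully, escaping $B_r(\Lambda)$ means some connected component of $\mathcal{I}_t(\Lambda)$ has diameter at least $r$, hence starting from some $v\in\Lambda$ it has $rad_v$ at least $r$. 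A union bound over $v\in\Lambda$ (and monotonicity, to replace $\Lambda$ by $\partial\Lambda$: a vertex deep inside $\Lambda$ contributes only if its cluster has radius at least $r$, which is already counted) then gives $\mathbb{P}(\Xi_{\Lambda,t,r}^c)\leq |\partial\Lambda|\,c_2\,e^{-4\psi(d)r}$.

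The main obstacle I anticipate is bookkeeping the conditioning correctly in the layered construction: one must be careful that after conditioning on the inner layers $=\Gamma$, the clocks governing the outermost layer---which lie in a disjoint time-window---are genuinely independent Bernoulli variables with the right parameter, and that the domination ``$C_{a,b}(\Gamma)\subset\partial\Gamma\cup C^{\Gamma^c}_{a,b}(\partial^s\Gamma)$'' used in Lemma \ref{prop} also transfers to a bound on the \emph{radius} (not just the size). This requires combining the size bound $\mathbb{E}[|\mathcal{I}_1(v)|]\leq c_1$ to control the number of seed vertices $|\partial^s\Gamma|$ of each new layer with a per-seed radius tail, via Markov's inequality on $|\partial\Gamma|$ or an explicit double union bound; the sub-criticality $1/2d<p_c(\Z^d)$ is exactly what makes the per-seed radius tail exponential, so no new percolation input beyond the classical subcritical sharpness is needed.
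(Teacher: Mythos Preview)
Your proposal is correct and follows essentially the same route as the paper: reduce to a single vertex by a union bound over $\partial\Lambda$, pigeonhole so that some layer of $\mathcal{I}_1(v)$ must reach distance $r/(2d)$, condition on the inner layers $\mathcal{D}_{i+1}(v)=\Gamma$ and use the domination trick from Lemma~\ref{prop} to pass to fresh subcritical Bernoulli clusters seeded on $\partial^s\Gamma$, then combine the per-seed exponential radius tail with the bound $\mathbb{E}[|\mathcal{I}_1(v)|]\leq c_1$ from Lemma~\ref{prop}. The paper carries out exactly this argument, using the crude path-counting bound $2d(2d-1)^{\lfloor r/2d\rfloor-1}(1/2d)^{\lfloor r/2d\rfloor}$ (your ``Peierls-type count'') rather than Menshikov/Aizenman--Barsky, and obtains the explicit rate $\psi(d)=-\tfrac{1}{8d}\log\tfrac{2d-1}{2d}$; the conditioning bookkeeping you flag as the main obstacle is handled there via the auxiliary sets $X_i(\Gamma)$, just as in Lemma~\ref{prop}.
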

\begin{proof} It suffices to show the result when $t=1$. Observe that for any vertex $x\in \Lambda^c$, we have that $x\in\mathcal{I}_1(\Lambda)$ if, and only if, $x\in\mathcal{I}_1(\partial\Lambda)$. By union bound we have
\begin{equation*}
 \mathbb{P}(\Xi_{\Lambda,r}^{c})=\mathbb{P}\left(\exists v\in\partial\Lambda\ s.t.\ rad_v(\mathcal{I}_1(v))>r\right) 
 \leq |\partial\Lambda|\max_{v\in\partial\Lambda} \mathbb{P}\left(rad_v(\mathcal{I}_1(v))>r\right).
\end{equation*}
Remember the definition of the auxiliary set $\mathcal{D}_j$ in the proof of Lemma \ref{prop}. Observe that, for any $v\in\partial\Lambda$, if $rad_v(\mathcal{I}_1(v))>r$, then, for some  $i=0,1,\dots,2d-1$, it holds 
\begin{equation}\label{galo_doido}rad_v\left(C_{\frac{i}{2d},\frac{i+1}{2d}}(u)\right)>\frac{r}{2d},\mbox{ for some $u\in\mathcal{D}_{i+1}(v)$}.
\end{equation} 
Therefore, with the convention that $\mathcal{D}_{2d}(v)=v$, we obtain
\begin{equation}\label{rad_bound}
\mathbb{P}\left(rad_v(\mathcal{I}_1(v))>r\right)\leq
\sum_{i=0}^{2d-1}\sum_{\Gamma}\mathbb{P}\left(\exists u\in \partial\Gamma\mbox{ s.t. }rad_v\left(C_{\frac{i}{2d},\frac{i+1}{2d}}(u)\right)>\frac{r}{2d}\middle|\mathcal{D}_{i+1}(v)=\Gamma\right)\P\left( \mathcal{D}_{i+1}(v)=\Gamma\right).
\end{equation} 
Recall the definition of the random set $X_i(\Gamma)$ in \eqref{fronteira}. We now use an analogous argument as in the proof of Lemma \ref{prop}. Observe that the first factor in each term on the r.h.s. of \eqref{rad_bound} can be written as
\begin{align*}\mathbb{E}\Big[\mathbb{P}\Big(\exists u\in \partial\Gamma\mbox{ s.t.} &\left.\left. rad_u\left(C_{\frac{i}{2d},\frac{i+1}{2d}}(u)\right)>\frac{r}{2d}\middle|\mathcal{D}_{i+1}(v)=\Gamma, X_i(\Gamma)\right)\right]\\
&\leq\mathbb{P}\left(\exists u\in \partial\Gamma\mbox{ s.t. }rad_u\left(C_{\frac{i}{2d},\frac{i+1}{2d}}(u)\right)>\frac{r}{2d}\middle|\mathcal{D}_{i+1}(v)=\Gamma, X_i(\Gamma)=\partial^e\Gamma\right)\\
&\leq \mathbb{P}\left(\exists u\in \partial^s\Gamma\mbox{ s.t. }rad_u\left(C_{\frac{i}{2d},\frac{i+1}{2d}}^{\Gamma^c}(u)\right)\geq\frac{r}{2d} \middle|\mathcal{D}_{i+1}(v)=\Gamma, X_i(\Gamma)=\partial^e\Gamma\right)\\
&=\mathbb{P}\left(\exists u\in \partial^s\Gamma\mbox{ s.t. }rad_u\left(C_{\frac{i}{2d},\frac{i+1}{2d}}^{\Gamma^c}(u)\right)\geq\frac{r}{2d} \right)\leq|\partial^s\Gamma|\sup_{u\in\partial^s\Gamma}\mathbb{P}\left(rad_u\left(C_{\frac{i}{2d},\frac{i+1}{2d}}(u)\right)\geq\frac{r}{2d}\right)\\
&\leq |\partial^s\Gamma|2d(2d-1)^{\lfloor{(r/2d)}\rfloor-1}\left(\frac{1}{2d}\right)^{\lfloor{r/2d}\rfloor}.
\end{align*}
Last inequality follows by a standard counting argument for Bernoulli percolation of parameter $1/{2d}$.
Therefore, plugging the above bound in \eqref{rad_bound} we obtain
\begin{align*}\mathbb{P}\left(rad_v(\mathcal{I}_1(v))>r\right)&\leq \frac{4d^2}{2d-1} \left(\frac{2d-1}{2d}\right)^{\frac{r}{2d}}\displaystyle\sum_{i=0}^{2d-1}\sum_{\Gamma}2d|\Gamma|\mathbb{P}(\mathcal{D}_{i+1}(v)=\Gamma)\\
&\leq \frac{4d^2}{2d-1} \left(\frac{2d-1}{2d}\right)^{\frac{r}{2d}}\displaystyle\sum_{i=0}^{2d-1}2d\mathbb{E}[\mathcal{D}_{i+1}(v)]\leq \frac{16d^4}{2d-1} \left(\frac{2d-1}{2d}\right)^{\frac{r}{2d}}\mathbb{E}[\mathcal{I}_1(v)].
\end{align*}
An application of Lemma \ref{prop} gives
\begin{equation*}\mathbb{P}\left(rad_v(\mathcal{I}_1(v))>r\right)\leq c_2(d)e^{-4\psi r},
\end{equation*}
with
$\psi(d)=-\frac{1}{8d}\log\frac{2d-1}{2d}$.
\end{proof}

We are ready to prove Theorem \ref{main}. The proof consists of an application of Lemma \ref{independencia} and Lemma \ref{exponential_decay}. 

\bigskip

\hspace{-0.5cm}\textit{Proof of Theorem \ref{main}.} Let $\delta=\delta(\Lambda_1,\Lambda_2)$, and write $\Xi_i=\Xi_{\Lambda_i,t,\frac{\delta}{4}}$, $i=1,2$. Also,
let $\mathcal{A}_i=\omega_t^{-1}(A_i)$ denote the set of constraints and clocks that induce the occurrence of the event $A_i $ at time $t$. For a cleaner notation, write $\mu_{\rho}=\mathbb{P}\times\mathbb{P}_{\rho}$. Apply Lemma \ref{independencia} and Lemma \ref{exponential_decay} to obtain

\begin{equation*}|\mathbb{P}_{\rho,t}(A_1\cap A_2)-\mathbb{P}_{\rho,t}(A_1)\mathbb{P}_{\rho,t}(A_2)|\leq |\mu_{\rho}\prt{\mathcal{A}_1\cap\mathcal{A}_2\cap\Xi_1\cap\Xi_2}-\mu_{\rho}(\mathcal{A}_1)\mu_{\rho}(\mathcal{A}_2)| +\mathbb{P}\prt{\Xi_1^c\cup\Xi_2^c}
\end{equation*}
\begin{equation*}
\leq|\mu_{\rho}\prt{\mathcal{A}_1\cap\Xi_1}\mu_{\rho}\prt{\mathcal{A}_2\cap\Xi_2}-\mu_{\rho}(\mathcal{A}_1)\mu_{\rho}(\mathcal{A}_2)|+c_2(|\partial\Lambda_1| +|\partial\Lambda_2|)e^{-\psi \delta}.
\end{equation*}
Observe that
\begin{equation*}\mu_{\rho}\prt{\mathcal{A}_1\cap\Xi_1} \mu_{\rho}\prt{\mathcal{A}_2\cap\Xi_2}-\mu_{\rho}\prt{\mathcal{A}_1} \mu_{\rho}\prt{\mathcal{A}_2}
\end{equation*}
\begin{equation*}=\mu_{\rho}\prt{\mathcal{A}_1\cap\Xi_1}\left[\mu_{\rho}\prt{\mathcal{A}_2\cap\Xi_2}-\mu_{\rho}\prt{\mathcal{A}_2}\right]-\mu_{\rho}\prt{\mathcal{A}_1\cap\Xi_1^c}\mu_{\rho}\prt{\mathcal{A}_2}.
\end{equation*}
This gives
\begin{equation*}|\mu_{\rho}\prt{\mathcal{A}_1\cap\Xi_1} \mu_{\rho}\prt{\mathcal{A}_2\cap\Xi_2}-\mu_{\rho}\prt{\mathcal{A}_1} \mu_{\rho}\prt{\mathcal{A}_2}|
\end{equation*}
\begin{equation*}\leq|\mu_{\rho}\prt{\mathcal{A}_2\cap\Xi_2}-\mu_{\rho}\prt{\mathcal{A}_2}|+|\mu_{\rho}\prt{\mathcal{A}_1\cap\Xi_1^c}\mu_{\rho}\prt{\mathcal{A}_2}|\leq \mathbb{P}(\Xi_1^c)+\mathbb{P}(\Xi_2^c).
\end{equation*}
A second application of Lemma \ref{exponential_decay} gives the result with $c_3=2c_2$.
\qed

\begin{remark}\label{main_cor}
The proof of Theorem \ref{main} can be extended to the following setting. Let $\Lambda_1$ and $\Lambda_2$ be disjoint subsets of $\Z^d$ with $|\partial\Lambda_1|<\infty$ and $|\partial\Lambda_2|=\infty$, and such that $\delta=\delta(\Lambda_1,\Lambda_2)$. If we assume $A_i\in \mathcal{H}_{\E(\Lambda_i)}$, $i\in\{1,2\}$,  then
\begin{equation*}\left|\mathbb{P}_{\rho,t}(A_1\cap A_2)-\mathbb{P}_{\rho,t}(A_1)\mathbb{P}_{\rho,t}(A_2)\right|\leq c_3\left(|\partial\Lambda_1|+|\partial B_{\delta}(\Lambda_1)|\right)e^{-\psi \delta},
\end{equation*}
for all $\rho=(\rho_0,\dots,\rho_{2d-1})$ and $t\in[0,1]$.
\end{remark}

\section{Continuity for Local Events}\label{contin_sec}

Consider the model on $\LL^d$. In this section we show that $\mathbb{P}_{\rho,t}(A)$ is a continuous function of $(\rho,t)$ for any local event $A$, a fact that will be useful in Section \ref{main_thm} when we prove Theorem \ref{PhaseTransition}.   

To prove continuity of $\mathbb{P}_{\rho,t}$ we shall compare two Constrained-degree percolation processes with densities $\rho$ and $\pi$ in a neighborhood of $t$, and we do this through the following coupling. Let $X=\{X(v)\}_{ v\in\Z^d}$ be a collection of independent uniform random variables on $[0,1]$, with corresponding product measure $\mathcal{Q}$.  Given $\rho=(\rho_0,\dots,\rho_{2d-1})$, write $\bar{\rho}_m=\sum_{i=0}^m \rho_i$, and define $\kappa_{\rho}$  as 
\begin{equation*}\label{coup}
 \kappa_{\rho}(v)=
    \begin{cases}
            0, &         \text{ if } X(v)<\rho_{0},\\
            j, &          \text{ if } \bar{\rho}_{j-1}\leq X(v)<\bar{\rho}_{j},\, j=1,\dots,2d-1.
           
    \end{cases}
    \end{equation*}
We clearly have, for all $v\in\Z^d$,
\begin{equation*}\P_{\rho}(\kappa_v=j)=
\mathcal{Q}(\kappa_{\rho}(v)=j),\  \forall\rho=(\rho_0,\dots,\rho_{2d-1}).
\end{equation*}
 
Given the sequences $X=\{X(v)\}_{v\in\Z^d}$ and $U=\{U_e\}_{e\in \E^d}$, we denote by $\omega_{\rho,t}(X,U)$ the induced percolation configuration. Finally, given $S\subset \Z^d$, $r\in\mathbb{N}$, and $\alpha>0$, write
\begin{align*}
W_{\rho,t,\alpha}(S)&=\{(X,U)\colon |X(v)-\bar{\rho}_{j}|>\alpha,\ \forall\ 0\leq j\leq 2d-1,\ \forall\ v\in B_{r+1}(S)\mbox{ and }\\ 
                           & U_e\notin(t-\alpha,t+\alpha),\ \forall\, e\in \E(B_{r+1}(S))\}.    
\end{align*}
Note that the set $W_{\rho,t,\alpha}(S)$ also depends on $r$,  but we keep this dependence implicit to lighten the notation. 

\begin{thm}\label{continuidade}
Let $A$ be a local event. Then $\mathbb{P}_{\rho,t}(A)$ is a continuous function of $(\rho,t)$.
\end{thm}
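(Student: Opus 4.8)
The plan is to show that $\mathbb{P}_{\rho,t}(A)$ is continuous at an arbitrary point $(\rho_0,t_0)$ by using the coupling $\mathcal{Q}$ introduced above together with the localization provided by Lemma~\ref{independencia} and the exponential tail bound of Lemma~\ref{exponential_decay}. Since $A$ is local, fix a finite set $\Lambda$ with $A\in\mathcal{H}_{\E(\Lambda)}$. First I would fix a large radius $r$ and work on the good clock event $\Xi_{\Lambda,t,r}$: by Lemma~\ref{independencia}, on $\Xi_{\Lambda,t,r}$ the occurrence of $A$ at time $t$ under the coupled dynamics depends only on the finitely many variables $\{X(v)\colon v\in B_{r+1}(\Lambda)\}$ and $\{U_e\colon e\in\E(B_{r+1}(\Lambda))\}$. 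The point of the event $W_{\rho,t,\alpha}(\Lambda)$ is precisely that, off a set of the coupling variables of $\mathcal{Q}\times\mathbb{P}$-measure at most $(2d|B_{r+1}(\Lambda)|+|\E(B_{r+1}(\Lambda))|)\cdot 2\alpha$, none of these finitely many variables lies within $\alpha$ of a threshold $\bar\rho_j$ or within $\alpha$ of $t$; hence on $W_{\rho_0,t_0,\alpha}(\Lambda)$ the assignment of constraints $\kappa_{\rho}(v)$ and the ordering of each $U_e$ relative to the cutoff time are locally \emph{stable} under any perturbation $(\rho,t)$ with $\|\rho-\rho_0\|_\infty<\alpha$ and $|t-t_0|<\alpha$. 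Consequently, on $W_{\rho_0,t_0,\alpha}(\Lambda)\cap\Xi_{\Lambda,t_0,r}$ the configurations $\omega_{\rho,t}(X,U)$ and $\omega_{\rho_0,t_0}(X,U)$ agree on $\E(\Lambda)$, so the indicator of $A$ is unchanged.

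The estimate would then be assembled as follows. Write $\Delta=|\mathbb{P}_{\rho,t}(A)-\mathbb{P}_{\rho_0,t_0}(A)|$ and bound it, using the coupling, by the $\mathcal{Q}\times\mathbb{P}$-probability of the symmetric difference of $\{\omega_{\rho,t}(X,U)\in A\}$ and $\{\omega_{\rho_0,t_0}(X,U)\in A\}$. This symmetric difference is contained in the complement of the good set
\begin{equation*}
W_{\rho_0,t_0,\alpha}(\Lambda)\cap\Xi_{\Lambda,t_0,r}\cap\Xi_{\Lambda,t,r},
\end{equation*}
so by a union bound
\begin{equation*}
\Delta\leq \big(2d|B_{r+1}(\Lambda)|+|\E(B_{r+1}(\Lambda))|\big)2\alpha + 2c_2|\partial\Lambda|e^{-4\psi r},
\end{equation*}
where the last two terms come from two applications of Lemma~\ref{exponential_decay} (one at $t_0$, one at $t$, both valid uniformly in $t\in[0,1]$), valid as soon as $\|\rho-\rho_0\|_\infty<\alpha$ and $|t-t_0|<\alpha$. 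Given $\varepsilon>0$, first choose $r$ so large that $2c_2|\partial\Lambda|e^{-4\psi r}<\varepsilon/2$, and then choose $\alpha>0$ so small that the first term is $<\varepsilon/2$; this forces $\Delta<\varepsilon$ on a neighborhood of $(\rho_0,t_0)$, proving continuity.

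The one subtlety I would be careful about — and what I expect to be the main obstacle — is justifying that, on the stable event, the \emph{entire} local dynamics, not merely the constraint values, is unperturbed: one has to verify that changing $t$ to $t'$ within $\alpha$ does not flip the state of any edge in $\E(\mathcal{I}_{t_0}(\Lambda))$. This is where the structure of the proof of Lemma~\ref{independencia} is reused: ordering the relevant edges by their clock times $U_{e_1}<\cdots<U_{e_m}$ and noting that, since no $U_{e_i}$ lies in $(t_0-\alpha,t_0+\alpha)$, each indicator $\mathbbm{1}_{\{U_{e_i}\leq t\}}$ is constant for $t\in(t_0-\alpha,t_0+\alpha)$; an induction on $i$ identical to that of Lemma~\ref{independencia} then shows the degrees of the endpoints of $e_i$ at the relevant times coincide in the two processes, hence the edge states coincide. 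A minor point to handle cleanly is that $\mathcal{I}_t(\Lambda)$ itself depends on $t$, but since $t\mapsto\mathcal{I}_t(\Lambda)$ is monotone and, on $W_{\rho_0,t_0,\alpha}(\Lambda)$, no clock sits in the window $(t_0-\alpha,t_0+\alpha)$ containing the relevant cutoffs $(m-1)/2d,\dots$, the set is locally constant in $t$ as well, so all the events $\Xi_{\Lambda,t,r}$ for $t$ near $t_0$ coincide on the good set and the bound above is legitimate.
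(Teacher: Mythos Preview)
Your proposal is correct and follows essentially the same route as the paper: both arguments use the coupling via $\mathcal{Q}$ and intersect the stability event $W_{\rho_0,t_0,\alpha}$ with the localization event $\Xi_{\Lambda,t_0,r}$ to force the two coupled configurations to agree on $\E(\Lambda)$, then bound the complement. The only differences are cosmetic: you invoke Lemma~\ref{exponential_decay} to bound $\mathbb{P}(\Xi^c)$ quantitatively (the paper just uses that $\mathcal{I}_t(S)$ is a.s.\ finite), you include the $\Xi$ event at both times $t_0$ and $t$ (harmless but, as you yourself note, redundant on the good set), and you spell out the induction behind the ``states coincide'' step that the paper leaves to a reference to Lemma~\ref{independencia}.
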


\begin{proof} Fix $\rho=(\rho_0,\dots,\rho_{2d-1})$, $t\in[0,1]$, and $\epsilon>0$.  Take  a finite set $K\subset\E^d$ such that $A\in\mathcal{H}_K$, and let $S=S(K)$ be the set of vertices incident to some element of $K$. Since $\mathcal{I}_t(S)$ is almost surely finite, the probability of the events $\Xi_{S,t,r}^c$ decrease to zero when $r$ goes to infinity. Fix $r$ large enough so that $P(\Xi_{S,t,r}^c)<\epsilon/2$. Note that $W^c_{\rho,t,\alpha}(S)$ also decreases when $\alpha$ goes to zero, and hence we can find $\alpha=\alpha(\rho,t,\epsilon,r)$ small enough so that $(\mathcal{Q}\times \mathbb{P})(W_{\rho,t,\alpha}^c(S))<\epsilon/2$. Denote by $\mathbb{E}_{\mathcal{Q}\times \mathbb{P}}(\cdot)$ the expectation with respect to the probability measure $\mathcal{Q}\times \mathbb{P}$. Clearly,
\begin{align*}
    |\P_{\pi,\tau}(A)-\mathbb{P}_{\rho,t}(A)|&= 
    \left|\mathbb{E}_{\mathcal{Q}\times \mathbb{P}}\left[\mathbbm{1}_{\{ \omega_{\pi,\tau}\in A\}}-
    \mathbbm{1}_{\{\omega_{\rho,t}\in A\}}\right]\right|\\
    &=\left|\mathbb{E}_{\mathcal{Q}\times \mathbb{P}}\left[\mathbbm{1}_{\{ \omega_{\pi,\tau}\in A\}}-
    \mathbbm{1}_{\{\omega_{\rho,t}\in A\}}; W_{\rho,t,\alpha}(S)\cap\Xi_{S,t,r}\right]\right|\\
    &+\left|\mathbb{E}_{\mathcal{Q}\times \mathbb{P}}\left[\mathbbm{1}_{\{ \omega_{\pi,\tau}\in A\}}-
    \mathbbm{1}_{\{\omega_{\rho,t}\in A\}};W_{\rho,t,\alpha}^c(S)\cup\Xi_{S,t,r}^c\right]\right|.
\end{align*}

 The crucial observation is the following: if $\pi=(\pi_0,\dots,\pi_{2d-1})$ and $\tau$ are such that $\displaystyle\max_{j}|\rho_{j}-\pi_j|<\alpha$, and $\tau\in(t-\alpha,t+\alpha)$, then
 \begin{equation*}\mathbb{E}_{\mathcal{Q}\times \mathbb{P}}\left[\mathbbm{1}_{\{ \omega_{\pi,\tau}\in A\}}-
    \mathbbm{1}_{\{\omega_{\rho,t}\in A\}}; W_{\rho,t,\alpha}(S)\cap\Xi_{S,t,r}\right]=0.
    \end{equation*}
    This follows because, on the event $W_{\rho,t,\alpha}(S)\cap\Xi_{S,t,r}$, the indicators $\mathbbm{1}_{\{ \omega_{\pi,\tau}\in A\}}$ and 
    $\mathbbm{1}_{\{\omega_{\rho,t}\in A\}}$ are equal.
     Indeed, let $e=\langle x,y\rangle \in \E(B_{r+1}(S))$ and note that, if the clock of edge $e$ rings after time $t+\alpha$, then it also rings after time $\tau$. Therefore, $e$ is closed on configurations $\omega_{\rho,t}$ and $\omega_{\pi,\tau}$. On the other hand, if the clock of edge $e$ rings before time $t-\alpha$, then since the constraints of $x$ and $y$ are the same on $W_{\rho,t,\alpha}(S)$, the state of $e$ is also the same in both $\omega_{\rho,t}$ and $\omega_{\pi,\tau}$. The fact that we are also in $\Xi_{S,t,r}$ enables us to use Lemma \ref{independencia}, and the claim follows.

The discussion above yields 
    \begin{equation*}
    |\mathbb{P}_{\pi,\tau}(A)-\mathbb{P}_{\rho,t}(A)|\leq (\mathcal{Q}\times \mathbb{P})(W_{\rho,t,\alpha}^c(S))+(\mathcal{Q}\times \mathbb{P})( \Xi_{S,t,r}^c)<\epsilon,
\end{equation*}
and the proof is complete.
\end{proof}

\begin{remark} The results of Section \ref{decoup_sec} and Section \ref{contin_sec} hold in the more general case of an infinite, connected and bounded degree graph $\mathcal{G}=(\V,\E)$. We chose to work with $\mathbb{L}^d$ to avoid unnecessary complications and distractions.  
\end{remark}

\section{Proof of the Main Result}\label{main_thm}

Consider the model on $\LL^2$. In this section, we prove Theorem \ref{PhaseTransition} using a coarse-graining argument. We do this in two steps, namely, the triggering argument and the induction step. In the former,  we show that the probability of connecting the box of radius $N$ to the boundary of the box of radius $2N$ by a closed dual-path decays exponentially fast when $N$ goes to infinity when $\rho_3$ and $t$ are equal to 1. For a fixed scale $L_0$, Theorem \ref{continuidade} is then used to show the existence of some $\eta=\eta(L_0)>0$ such that the probability above is still small for all values of $\rho_3$ and $t$ larger than $1-\eta$.  In the latter, we show that whenever the probability of the event just described is small at some scale, then it is also small at larger scales.  At this point, the decoupling statement of Theorem \ref{main} will be important to deal with dependence issues. After that, the renormalization takes place, and we show, through a Peierls argument, that percolation occurs almost surely.

Let $(\mathbb{\Z}^2)^*=\mathbb{\Z}^2 + (\frac{1}{2},\frac{1}{2})$ be the set of vertices of the dual lattice   $(\mathbb{\LL}^2)^*$. For each edge $e\in\mathcal{E}^2$ we define its dual $e^*$ to be the unique edge of $(\mathcal{E}^2)^*$ that crosses $e$. Given a configuration $\omega_t$ on $\mathbb{L}^2$, we define the induced dual configuration $\omega^*_t$ on $( \mathbb{L}^2)^*$, declaring each dual edge $e^*$ to be \textit{t-open} if, and only if, $e$ is \textit{t-open}, that is, $\omega^*_{t}(e^*)=\omega_{t}(e)$. We denote by $\mathbb{P}_{\rho,t}^*$ the law of the process in the dual lattice.
 
Given a natural number $N$, and a vertex $x\in(\Z^2)^*$, let $A_N(x)$ be the event that there is a closed dual path connecting the box $B_N^*(x)$ to the  boundary of the box $B_{2N}^*(x)$. Here $B_N^*(x)=[x-N,x+N]^2$ denotes the box of side length $2N$ centered at $x$. We write
\begin{equation*}
\mathbb{P}_{\rho,t}^*(N,x)=\mathbb{P}_{\rho,t}^*(A_N(x)).
 \end{equation*}
When $x$ is the origin of $(\LL^2)^*$, we adopt a lighten notation and write $A_N$, $B^*_N$, and  $\mathbb{P}_{\rho,t}^*(N)$.

\subsection{The Multiscale Scheme}

In this section, we implement a renormalization scheme. The following two lemmas give the triggering statement and the induction step, respectively.

\begin{lemma}\label{step2}
There are constants $0<c_6<1$ and $c_7>0$ with the property that, for any  $N\in\N$, there exists $\eta=\eta(N)>0$  such that 
\begin{align*}
    \mathbb{P}_{\rho,t}^*(N)\leq c_7N^2c_6^N,
\end{align*}
for all $\rho=(\rho_0,\rho_1,\rho_2,\rho_3)$ and $t$ with $(\rho_3,t)\in(1-\eta,1]^2$.
\end{lemma}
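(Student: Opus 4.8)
The plan is to split the estimate into two regimes according to whether we are at the extreme point $(\rho_3,t)=(1,1)$ or merely close to it. First I would establish the bound at $(\rho_3,t)=(1,1)$, i.e. show that $\mathbb{P}^*_{(0,0,0,1),1}(N)\leq c_7 N^2 c_6^N$ for suitable constants $0<c_6<1$, $c_7>0$. This is precisely the situation treated in \cite{LSSST}: when every vertex has constraint $3$ and all clocks have rung (time $1$), the model is the ordinary constrained-degree percolation with $\kappa\equiv 3$ on $\mathbb{L}^2$, which is known to percolate, and more to the point its dual process has exponentially decaying connectivities at $t=1$ (the subcritical-dual estimate underlying the phase transition in \cite{LSSST}). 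So at $(1,1)$ the event $A_N$ — a closed dual path from $B^*_N$ to $\partial B^*_{2N}$ — has probability at most $C$ (number of starting dual vertices, which is $O(N)$) times (number of length-$\geq N$ self-avoiding dual paths) times the per-edge decay; a standard Peierls count collapses this to $c_7 N^2 c_6^N$.

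Next I would upgrade from the single point $(1,1)$ to a neighborhood using Theorem \ref{continuidade}. The subtlety is that $A_N$ is a local event — it lives on the finite edge set $\E(B^*_{2N})$ (or rather its primal counterpart), which is fixed once $N$ is fixed — so $\mathbb{P}^*_{\rho,t}(N)=\mathbb{P}_{\rho,t}(A'_N)$ for a fixed local event $A'_N$, and by Theorem \ref{continuidade} the map $(\rho,t)\mapsto \mathbb{P}_{\rho,t}(A'_N)$ is continuous. Fix $N$ and choose the target bound to be, say, $c_7 N^2 c_6^N$ with $c_6,c_7$ as produced in the first step but with a little room to spare — concretely, first prove the $(1,1)$-bound with constants giving $\tfrac12 c_7 N^2 c_6^N$, then continuity yields a neighborhood of $(1,1)$ in $(\rho,t)$-space on which $\mathbb{P}_{\rho,t}(A'_N)\leq c_7 N^2 c_6^N$. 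Since $(\rho_3,t)\in(1-\eta,1]^2$ with $\eta$ small forces $\rho$ close to $(0,0,0,1)$ (because $\rho_0+\rho_1+\rho_2=1-\rho_3<\eta$) and $t$ close to $1$, we can extract such an $\eta=\eta(N)>0$, giving the statement. Note the constants $c_6,c_7$ do not depend on $N$, only $\eta$ does, exactly as the lemma requires.

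The main obstacle is the first step: producing the exponential bound $c_6^N$ at $(\rho_3,t)=(1,1)$ with $N$-independent constants. This is not automatic from the mere existence of a phase transition in \cite{LSSST}; one needs the quantitative statement that the dual closed-path connectivity decays exponentially at time $1$ — essentially that the $t=1$, $\kappa\equiv3$ model is "very supercritical" in the sense that its dual is subcritical with a uniform rate. I would either cite the relevant estimate from \cite{LSSST} directly (their proof of the non-trivial phase transition already passes through such a renormalization at unit time, so the exponential decay of the analogue of $A_N$ at $(1,1)$ should be available there, possibly after fixing the base scale $L_0$ large enough), or, if a black-box citation is not clean, reprove it: at $t=1$ with $\kappa\equiv 3$ the open cluster of the origin is known to be infinite a.s., and a direct combinatorial argument shows a closed dual circuit must avoid using two edges at any dual vertex in a way that is heavily constrained, yielding the geometric decay. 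A secondary, minor point to be careful about is that $\eta$ genuinely depends on $N$ (the neighborhood from continuity shrinks as $N$ grows, since $A'_N$ lives on a larger edge set), which is fine for the multiscale scheme since we will fix one base scale $N=L_0$ before invoking this lemma.
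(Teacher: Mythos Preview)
Your proposal is correct in overall architecture and matches the paper's approach: establish the bound first at the extreme point $(\rho_3,t)=(1,1)$ using input from \cite{LSSST}, then invoke Theorem~\ref{continuidade} (continuity of $\mathbb{P}_{\rho,t}$ on the local event $A_N$) to produce $\eta=\eta(N)$. You also correctly flag that only $\eta$, not $c_6,c_7$, depends on $N$, and that $A_N$ is local.

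The one point where your sketch understates the difficulty is the phrase ``a standard Peierls count \dots\ per-edge decay.'' At $(\rho_3,t)=(1,1)$ the states of dual edges along a path are highly correlated, and there is no uniform single-edge closed probability to plug into a naive $\sum_n 3^n p^n$ bound. The paper does \emph{not} proceed this way: it decomposes dual paths $\gamma$ by their number of edges $n$, number of sides $r$, and number of sides of length one $m$, and uses the structural fact (specific to $\kappa\equiv 3$, $t=1$) that a closed dual path must have all interior sides of length at least two. The key probabilistic input is then the estimate from Lemmas~1 and~2 of \cite{LSSST},
\[
\mathbb{P}_{\rho^{\star},1}^*(\exists\, \gamma\in\Gamma_{n,r,0} \text{ closed})\leq 2^{r}\binom{n-r-1}{r-1}\Big(\tfrac{2}{3}\Big)^{n-2r}\nu^{2r},
\]
followed by an optimization over $r$ to extract the exponential rate $c_6<1$. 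So your plan to ``cite the relevant estimate from \cite{LSSST} directly'' is exactly what the paper does, but the estimate in question is this side-counting bound, not a per-edge decay; your fallback description of the combinatorics should be adjusted accordingly.
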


\begin{proof}
Let $\gamma=(x_0,x_1,\dots,x_l)$ with $\langle x_i,x_{i+1}\rangle \in (\mathcal{E}^2)^*$, $0\leq i \leq l$, be a path in the dual lattice and denote by $n(\gamma)$ the number of edges of $\gamma$ (in this case $n(\gamma)=l$). Also denote by $r(\gamma)$ the number of sides of the path $\gamma$, that is, 
\begin{equation*}
r(\gamma)=|\{i\in\{0,\dots,l-1\}\colon\ x_i-x_{i-1}\neq x_{i+1}-x_{i}\}|.
\end{equation*}
Let $\Gamma_{n,r,m}(x)$ be the set of all paths starting in $x\in (\mathbb{Z}^2)^*$  with $n$ edges, $r$ sides, and $m$ sides of length one. Define also  $\Gamma_{n}(x)=\bigcup_{r}\bigcup_m\Gamma_{n,r,m}(x)$. When $x$ is the origin, we write $\Gamma_{n,r,m}$ and $\Gamma_n$ instead of $\Gamma_{n,r,m}(0)$ and $\Gamma_n(0)$, respectively. Note that
\begin{equation}\label{UnionBound1}
\mathbb{P}_{\rho,t}^*(N)\leq\sum_{x\in B_N^*}\sum_{n\geq N}\mathbb{P}_{\rho,t}^*(\exists \gamma\in\Gamma_{n}(x) \mbox{ closed})\leq 8N \sum_{n\geq N}\mathbb{P}_{\rho,t}^*(\exists \gamma\in\Gamma_{n} \mbox{ closed}).
\end{equation}

Take $\rho_3=1$, $t=1$, and denote $\rho^{\star}=(0,0,0,1)$. In this case, each closed path $\gamma\in\Gamma_{n,r,m}(x)$ has at least two edges on each of its sides, with the possible exception of its extremities. Union bound gives
\begin{align}\label{UnionBound2}
\mathbb{P}_{\rho^{\star},1}^*(\exists \gamma\in\Gamma_{n} \mbox{ closed} )&\leq \sum_{m=0}^2\mathbb{P}_{\rho^{\star},1}^*(\exists \gamma\in\Gamma_{n} \mbox{ closed}, \gamma \mbox{ has } m \mbox{ sides of length one}).
\end{align}

It follows from Lemmas 1 and 2 of \cite{LSSST} that 
\begin{equation*}\label{EntropyEnergy}
\mathbb{P}_{\rho^{\star},1}^*(\exists \gamma\in\Gamma_{n,r,0} \mbox{ closed} )\leq 2^{r}\binom{n-r-1}{r-1} \left(\frac{2}{3}\right)^{n-2r}\nu^{2r},
\end{equation*}
where $\nu =\left(\frac{439}{18144}\right)^{\frac{1}{4}}$. This gives
\begin{equation}\label{JuntouUmasCoisas}
\sum_{n\geq N}\sum_{r=1}^{n/2} \mathbb{P}_{\rho^{\star},1}^*(\exists \gamma\in\Gamma_{n,r,0} \mbox{ closed})
\leq \sum_{n\geq N}\left( \frac{2}{3}\right)^n\sum_{r=1}^{\frac{n}{2}}\binom{n-r-1}{r-1} \left(\frac{9\nu^2}{2}\right)^r.
\end{equation}
A careful analysis of the function 
\begin{equation*}f(r)=\left(\frac{9\nu^2}{2}\right)^r\binom{n-r-1}{r-1},\ 1\leq r\leq\frac{n}{2},
\end{equation*}
shows that its maximum is attained at
 \begin{equation*}\label{max}
 r^*=\left\lceil \frac{n}{2}-\frac{2s +1 +\Pi^{1/2}}{8s +2}\right\rceil,
 \end{equation*}
with $s=\frac{9\nu^2}{2}$ and $\Pi=(4s +1)n(n-2)+(2s +1)^2$ (see Proposition 2 in \cite{LSSST}). Replacing $r$ with  $r^*$ on the r.h.s. of \eqref{JuntouUmasCoisas}, we obtain 
\begin{align}\label{series}
\sum_{n\geq N}\sum_{r=1}^{n/2} \mathbb{P}_{\rho^{\star},1}^*(\exists \gamma\in\Gamma_{n,r,0} \mbox{ closed})
&\leq \frac{1}{2}\sum_{n\geq N}\left( \frac{2}{3}\right)^nn \binom{n-r^*-1}{r^*-1}s^{r^*}.
\end{align}
An application of Stirling's formula gives
\begin{equation}\label{Stirling}
\binom{n-r^*-1}{r^*-1}< c_4\frac{(n-r^*-1)^{n-r^*-1}}{(r^*-1)^{r^*-1}(n-2r^*)^{n-2r^*}},
\end{equation}
for some constant $c_4>0$, and
it is not hard to see that
\begin{equation}\label{limite}
\lim_{n\rightarrow\infty}\left[c_4\frac{(n-r^*-1)^{n-r^*-1}}{(r^*-1)^{r^*-1}(n-2r^*)^{n-2r^*}}s^{r^*}\right]^{\frac{1}{n}}=\frac{1}{2}+\sqrt{s+\frac{1}{4}}.
\end{equation}
Hence, by \eqref{Stirling} and \eqref{limite}, we can find 
a constant $c_5>0$ such that
\begin{equation}\label{expo}
\left(\frac{2}{3}\right)^n\binom{n-r^*-1}{r^*-1}s^{r^*}<c_5\left(          \frac{1}{3}+\frac{2}{3}\sqrt{s+\frac{1}{4}}   +0.001  \right)^n,\ \forall n\geq1.
\end{equation}
Denote by $1/2<c_6<1$ the number between parentheses on the r.h.s. of \eqref{expo}. Plugging $(\ref{expo})$ on $(\ref{series})$, we get
\begin{equation*}
   \sum_{n\geq N}\sum_{r=1}^{n/2} \mathbb{P}_{\rho^{\star},1}^*(\exists \gamma\in\Gamma_{n,r,0} \mbox{ closed})<c_5\sum_{n\geq N}nc_6^n\leq \frac{c_5}{(1-c_6)^2} Nc_6^N.
\end{equation*}

To handle the second and third terms on the r.h.s. of \eqref{UnionBound2}, observe that the sides of length one must be necessarily on the extremities of the path. Hence,

\begin{equation*}\sum_{n\geq N}\mathbb{P}_{\rho^{\star},1}^*(\exists \gamma\in\Gamma_{n} \mbox{ closed }, \gamma \mbox{ has 1 side of length one })\leq 7 \frac{c_5}{(1-c_6)^2}(N-1)c_6^{N-1},
\end{equation*}
and 
\begin{equation*}\sum_{n\geq N}\mathbb{P}_{\rho^{\star},1}^*(\exists \gamma\in\Gamma_{n} \mbox{ closed }, \gamma \mbox{ has 2 sides of length one })\leq 12 \frac{c_5}{(1-c_6)^2}(N-2)c_6^{N-2}.
\end{equation*}
Putting all together we obtain 
\begin{equation*}\mathbb{P}_{\rho^*,t}^*(N)<  c_7N^2c_6^N,
\end{equation*} 
where $c_7=\frac{8c_5}{(1-c_6)^2}\left(1+\frac{7}{c_6}+\frac{12}{c_6^2}\right)$.
To finish the proof, use the continuity property given by Theorem \ref{continuidade} to obtain $\eta=\eta(N)>0$ such that 
\begin{equation*}\mathbb{P}_{\rho,t}^*(N)<c_7N^2c_6^N,
\end{equation*}
for all $\rho$ and $t$ such that $(\rho_3,t)\in(1-\eta,1]^2$.

\end{proof}

Let us dive into the details of the renormalization scheme. Define recursively the sequence of scales $\{L_k\}_{k\in\N}$ by 
\begin{equation}\label{scale}
L_k=\lfloor L_{k-1}^{1/2} \rfloor L_{k-1},\mbox{ for all $k\geq1$ }.
\end{equation}
We choose $L_0\geq 25$ large enough such that, for all $L\geq L_0$, the following conditions hold:

\begin{enumerate}
\item[C-1.] $Le^{-\psi L}\leq L^{-8}$;
\item[C-2.] $32(20c_3+1)L^{-1}\leq 1$;
\item[C-3.] $c_7L^2c_6^{L}\leq L^{-4}$.
\end{enumerate}

The next lemma gives the induction step.

\begin{lemma}\label{step1}For every $k\in \mathbb{Z}_{+}$, it holds 
\begin{equation*}
    \mathbb{P}_{\rho,t}^*(L_k)\leq L_k^{-4}\Rightarrow  \mathbb{P}_{\rho,t}^*(L_{k+1})\leq L_{k+1}^{-4},
\end{equation*}
for all $\rho=(\rho_0,\rho_1,\rho_2,\rho_3)$ and $t\in[0,1]$. 

\end{lemma}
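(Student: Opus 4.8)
The plan is to run a standard multiscale (coarse-graining) induction: I want to show that if the probability of a closed dual crossing from scale $L_k$ to $2L_k$ is small, then the same holds at scale $L_{k+1}$. The geometric input is that for $A_{L_{k+1}}$ to occur, a closed dual path must traverse an annulus of width $L_{k+1}$, which is $\lfloor L_k^{1/2}\rfloor$ times the width $L_k$. Tile the relevant region of $(\Z^2)^*$ by boxes of side $\sim L_k$; a closed dual path crossing the big annulus must cross at least $\sim\lfloor L_k^{1/2}\rfloor$ disjoint (or well-separated) boxes at scale $L_k$, and in each such box the event $A_{L_k}(x)$ (closed dual path from $B^*_{L_k}(x)$ to $\partial B^*_{2L_k}(x)$) must occur. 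So I would first write the union bound
\begin{equation*}
\mathbb{P}_{\rho,t}^*(L_{k+1})\leq \big(\text{number of choices of the chain of boxes}\big)\cdot \mathbb{P}_{\rho,t}^*\!\big(\textstyle\bigcap_{j} A_{L_k}(x_j)\big),
\end{equation*}
where the $x_j$ range over a chain of $\sim c\lfloor L_k^{1/2}\rfloor$ scale-$L_k$ boxes whose centers are pairwise at distance $\geq c' L_k$. The combinatorial factor is polynomial in $L_{k+1}$ (at most $(L_{k+1})^{2}$ times a constant to the power $\lfloor L_k^{1/2}\rfloor$, using that from each box there are $O(1)$ choices of a neighboring box in the chain).

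Next I would decouple the intersection. Each event $A_{L_k}(x_j)$ lives on the edges inside $B^*_{2L_k}(x_j)$, hence on $\E(\Lambda_j)$ for a set $\Lambda_j$ with $|\partial\Lambda_j|\leq cL_k$, and the $\Lambda_j$ are pairwise separated by distance $\geq c'L_k$. Applying Theorem \ref{main} iteratively (pulling off one factor at a time, controlling the error by $c_3(|\partial\Lambda_j|+|\partial\Lambda_{j'}|)e^{-\psi\delta}\leq 2c_3 c L_k e^{-\psi c' L_k}$, which by condition C-1 is at most, say, $20c_3 L_k^{-8}\cdot(\text{const})$), I get
\begin{equation*}
\mathbb{P}_{\rho,t}^*\!\Big(\bigcap_{j=1}^{J} A_{L_k}(x_j)\Big)\leq \prod_{j=1}^{J}\mathbb{P}_{\rho,t}^*(A_{L_k}(x_j)) + J\cdot 20c_3\, L_k\, e^{-\psi c' L_k},
\end{equation*}
with $J\asymp \lfloor L_k^{1/2}\rfloor$. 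By the induction hypothesis each factor is $\leq L_k^{-4}$, so the product is $\leq L_k^{-4J}$; this is a huge power and easily beats the polynomial combinatorial prefactor. The additive error term, using C-1 (so $L_k e^{-\psi L_k}\leq L_k^{-8}$ and a fortiori with $c'L_k$ in the exponent after absorbing constants into $L_0$), is at most something like $L_k^{-7}$; multiplied by the combinatorial prefactor it still needs to come out below $L_{k+1}^{-4}$. Here is where conditions C-1, C-2 are used: C-2 ($32(20c_3+1)L^{-1}\le1$) is exactly the kind of inequality one needs so that the number of boxes in the chain times $20c_3$ times the per-pair error, divided by the target $L_{k+1}^{-4}$, stays bounded by $1$, after plugging in $L_{k+1}=\lfloor L_k^{1/2}\rfloor L_k$ and $L_{k+1}^{-4}\approx L_k^{-6}$.

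Then I would collect terms: write $L_{k+1}^{-4}$ as the target, bound the combinatorial prefactor by $c\,L_{k+1}^2\,C^{\lfloor L_k^{1/2}\rfloor}$, bound the main term by $L_k^{-4\lfloor L_k^{1/2}\rfloor}$ (which, since $L_k\geq L_0\geq 25$, dominates $C^{\lfloor L_k^{1/2}\rfloor}$ and everything polynomial), and bound the decoupling-error term using C-1 and C-2; finally check the sum is $\leq L_{k+1}^{-4}$. The constants $L_0\geq 25$ and C-1–C-3 are tailored precisely so that this last arithmetic check goes through for every $L\geq L_0$, hence for every $L_k$.

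The main obstacle is bookkeeping the geometry and the error accumulation carefully: one must (i) make the chain of scale-$L_k$ boxes genuinely well-separated (distance $\geq c'L_k$, not merely disjoint) so that Theorem \ref{main} gives a usable exponential gain — this may force using boxes of side $\sim L_k/10$ or skipping every other box, changing constants but not the structure; (ii) count the number of admissible chains correctly, getting a bound of the form $C^{\#\text{boxes}}$ with $C$ an absolute constant; and (iii) verify that the additive decoupling errors, summed over the $\asymp\lfloor L_k^{1/2}\rfloor$ pairs and multiplied by the entropy factor, are dominated using exactly conditions C-1 and C-2. None of these steps is deep, but they are where all the constants $20$, $32$, $25$ in the hypotheses get pinned down, so the estimate has to be done with some care rather than with crude bounds.
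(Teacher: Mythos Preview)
Your proposal takes a genuinely different route from the paper, and as written it contains a real gap in the error accounting.

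The paper does \emph{not} use a chain of $J\asymp\lfloor L_k^{1/2}\rfloor$ boxes. It uses only \emph{two}: cover $\partial B^*_{L_{k+1}}$ and $\partial B^*_{2L_{k+1}}$ by scale-$L_k$ boxes, and observe that if $A_{L_{k+1}}$ occurs then $A_{L_k}(x)\cap A_{L_k}(y)$ occurs for some $x$ in the inner covering and some $y$ in the outer one. The entropy is therefore only $\sim 32(L_{k+1}/L_k)^2\approx 32L_k$, and a \emph{single} application of Theorem~\ref{main} gives
\[
\mathbb{P}_{\rho,t}^*(L_{k+1})\le 32\Big(\tfrac{L_{k+1}}{L_k}\Big)^2\Big[\mathbb{P}_{\rho,t}^*(L_k)^2+20c_3L_ke^{-\psi L_k}\Big].
\]
Now C-1 turns the error into $L_k^{-8}$, and plugging in $L_{k+1}\le L_k^{3/2}$ together with C-2 closes the induction. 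The constants $32$, $20c_3$, and the form of C-2 are tailored to exactly this two-box computation.

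Your chain argument generates an entropy factor $C^{J}$ with $J\asymp L_k^{1/2}$, which is \emph{super-polynomial} in $L_k$. For the main term this is harmless, since $(CL_k^{-4})^{J}$ still collapses. But for the additive decoupling error you invoke C-1 to replace $L_ke^{-\psi c'L_k}$ by $L_k^{-8}$, and then appeal to C-2. This fails: the contribution becomes roughly $C^{L_k^{1/2}}\cdot L_{k+1}^2\cdot J\cdot L_k^{-8}$, and no fixed polynomial inequality like $32(20c_3+1)L^{-1}\le 1$ can absorb the factor $C^{L_k^{1/2}}$. The quantity diverges rather than being bounded by $L_{k+1}^{-4}\approx L_k^{-6}$. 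Your approach can be repaired by keeping the full exponential $e^{-\psi c'L_k}$ (which does dominate $C^{L_k^{1/2}}$), but then you need a different, stronger largeness condition on $L_0$, and the claim that C-1 and C-2 are ``exactly'' what is needed is incorrect. The cleaner fix is to drop the chain and use just two well-separated boxes, as the paper does.
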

\begin{proof}
First, we cover the boundary of the box $B_{L_{k+1}}^*$ with boxes $B_{L_{k}}^*(x)$, in such a way that
$x$ lies in the boundary of $B^*_{L_{k+1}}$, and for any $B^*_{L_{k}}(y)$ in the coverage, it holds that $|B^*_{L_{k}}(x)\cap B^*_{L_{k}}(y)\cap \partial B^*_{L_{k+1}}|$ is at most 1. We cover the boundary of $B_{2L_{k+1}}^*$ in a similar way.

 An upper bound for the probability of the event $A_{k+1}$ can be obtained with the following observation. Whenever $A_{k+1}$ occurs, it must be that $A_k$ occurs for some pair of boxes in the previous scale, one box in each of the coverings described above (see Figure \ref{figura} for an illustration). 
This gives an entropy factor bounded by $32 \left(\frac{L_{k+1}}{L_k}\right)^2$. Therefore, by union bound, Theorem \ref{main}, and translation invariance, we obtain 
\begin{align}
\mathbb{P}_{\rho,t}^*(L_{k+1})&\leq 32 \left(\frac{L_{k+1}}{L_k}\right)^2\mathbb{P}_{\rho,t}^*(A_k(x)\cap A_k(y))\nonumber\\ 
&\leq 32 \left(\frac{L_{k+1}}{L_k}\right)^2\left[ \mathbb{P}_{\rho,t}^*(L_k)^2+2c_3|\partial B_{L_k}|\exp\left(-\psi \delta_k(x,y)\right)\label{conta}
\right],
\end{align}
where $\delta_k(x,y)$ is the distance between boxes $B^*_{2L_k}(x)$ and $B^*_{2L_k}(y)$. Since we choose $L_0\geq 25$, it follows that
\begin{equation}\label{menor_distancia}
\min_{x,y}\delta_k(x,y)=L_{k+1}-4L_k=L_k\left(\lfloor L_{k}^{1/2} \rfloor-4\right)\geq L_k,\ \forall k\geq0.
\end{equation}
\begin{figure}
\centering
\begin{tikzpicture}[scale=0.03]
\filldraw (0,0) circle (10pt);
\draw (50,-50)node[below] {$B_{L_{k+1}}$};
\draw (100,-100)node[below] {$B_{2L_{k+1}}$};
\draw (0,80)node[below] {$B_{2L_{k}}$};
\draw (0,0)node[above] {$0$};
\draw[line width=.03cm,gray] (-100,-100) rectangle (100,100);
\draw[line width=.03cm,gray] (-50,-50) rectangle (50,50);
\begin{scope}[shift={(-33,0)}]
\begin{scope}[shift={(0,50)}]
\draw[line width=.03cm] (45,-5) rectangle (55,5);
\draw[line width=.03cm] (40,-10) rectangle (60,10);
\end{scope}
\end{scope}
\begin{scope}[shift={(13,50)}]
\begin{scope}[shift={(-33,0)}]
\begin{scope}[shift={(0,50)}]
\draw[line width=.03cm] (45,-5) rectangle (55,5);
\draw[line width=.03cm] (40,-10) rectangle (60,10);
\end{scope}
\end{scope}
\end{scope}
\draw[line width=.05cm,red] (13,50) .. controls (90,60) and (-20,70) .. (28,100);
\end{tikzpicture}\caption{An illustration of the cascade events $A_{L_k}$. \label{figura}}
\end{figure}
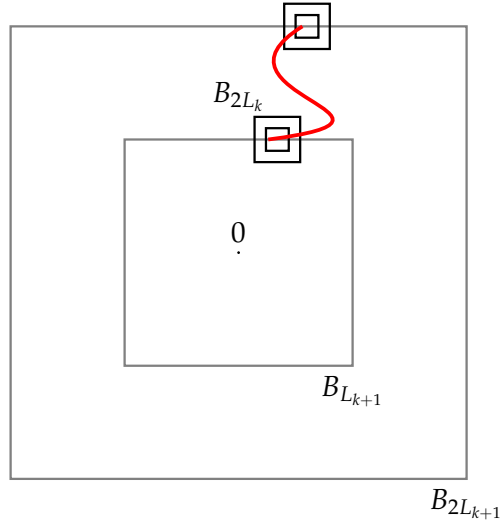
As $|\partial B_{L_k}|\leq 10 L_k$, for all $k\geq 0$, Equations \eqref{conta} and \eqref{menor_distancia} give  
\begin{equation*}
\mathbb{P}_{\rho,t}^*(L_{k+1})\leq 32 \left(\frac{L_{k+1}}{L_k}\right)^2
\left(\mathbb{P}_{\rho,t}^*(L_k)^2+ 20c_3L_k\exp(-\psi L_k)   \right).
\end{equation*}
If $\mathbb{P}_{\rho,t}^*(L_k)\leq L_k^{-4}$, last inequality implies
\begin{equation*}
\frac{\mathbb{P}_{\rho,t}^*(L_{k+1})}{L_{k+1}^{-4}}\leq 
32L_{k+1}^4L_k\left(   L_k^{-8}+20c_3L_ke^{-\psi L_k}\right),
\end{equation*}
and $(\ref{scale})$ gives 
\begin{equation*}\label{QuaseLa}
\frac{\mathbb{P}_{\rho,t}^*(L_{k+1})}{L_{k+1}^{-4}}\leq 32 L_k^7\left(   L_k^{-8}+20c_3L_ke^{-\psi L_k}\right).
\end{equation*}
By our choice of $L_0$ (condition C-1), it follows that  
\begin{equation}\label{ExpGanha}
    L_ke^{-\psi L_k}\leq L_k^{-8},\ \forall k\geq 0,
\end{equation}
and, with the aid of \eqref{ExpGanha}, we get
\begin{equation*}\frac{\mathbb{P}_{\rho,t}^*(L_{k+1})}{L_{k+1}^{-4}}\leq
32(20c_3+1) L_k^{-1}.
\end{equation*}
The result follows, again by our choice of $L_0$ (condition C-2).

\end{proof}

This completes the two key lemmas. The proof of Theorem \ref{PhaseTransition} now follows with the help of the well known topological fact that a cluster of open edges on $\LL^2$ is finite if, and only if, it is surrounded by a circuit of closed edges on $(\mathbb{L}^2)^*$. We write the details in the next section.  

\subsection{Proof of Theorem \ref{PhaseTransition}} 


Our first observation is that Lemma \ref{step2} and Lemma \ref{step1} imply the existence of $\eta>0$ such that 
\begin{equation}\label{multi}
    \mathbb{P}_{\rho,t}^*(L_k)\leq L_k^{-4},\ \forall  k\geq 0,
\end{equation}
for all $\rho$ and $t$ such that $(\rho_3,t)\in(1-\eta,1]^2$. Indeed, take $N=L_0$ in Lemma \ref{step2} and note that, by our choice of $L_0$ (condition C-3), 
$
  \mathbb{P}_{\rho,t}^*(L_{0})\leq  c_7L_{0}^2c_6^{L_{0}}\leq L_{0}^{-4}.
$ Then, apply Lemma \ref{step1}.

Now, fix $(\rho_3,t)$ such that (\ref{multi}) holds. Let $\mathcal{O}_L$ be the open cluster of the line segment $\{0,\dots,L\}\times\{0\}$, that is,  
\begin{equation*}\mathcal{O}_L=\bigcup_{u\in\{0,\dots,L\}\times\{0\}}\{v\in \Z^2: v \mbox{ is connected to $u$ by an open path}\}.
\end{equation*}
Clearly, $\mathcal{O}_L$ is infinite with positive probability if, and only if, $\mathcal{O}_0$ percolates with positive probability.
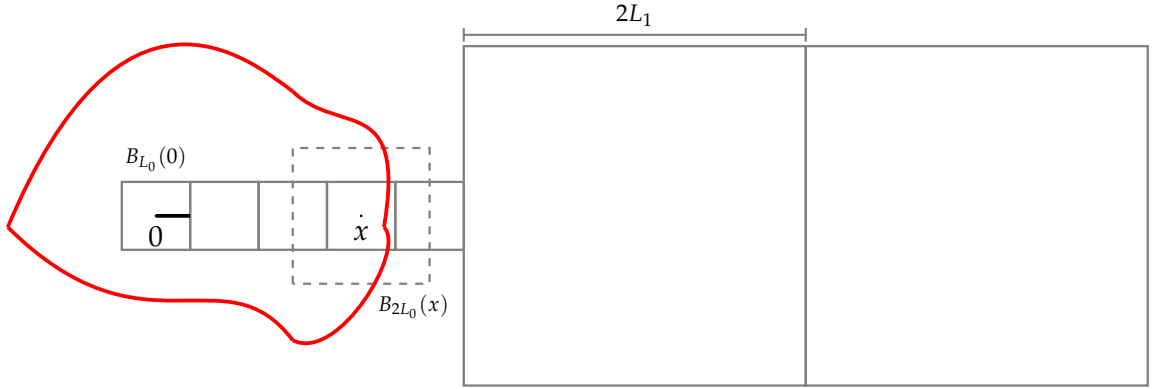
\begin{figure}[ht]
\centering
\begin{tikzpicture}[scale=0.03]
\filldraw (25,5) circle (10pt);
\draw (25,05)node[below] {$0$};
\draw (138,-25)node[below] {\scriptsize $B_{2L_{0}}(x)$};
\draw (25,40)node[below] {\scriptsize $B_{L_{0}}(0)$};
\draw (115,05)node[below] {$x$};
\filldraw (115,05) circle (10pt);

\draw[line width=.03cm,gray] (10,-10) rectangle (40,20);
\draw[line width=.03cm,gray] (40,-10) rectangle (70,20);
\draw[line width=.03cm,gray] (70,-10) rectangle (100,20);
\draw[line width=.03cm,gray] (100,-10) rectangle (130,20);
\draw[line width=.03cm,gray] (130,-10) rectangle (160,20);

\draw[line width=.03cm,gray] (160,-70) rectangle (310,80);
\draw[line width=.03cm,gray] (310,-70) rectangle (460,80);

\draw[gray, thick] (160,82) -- (160,88);
\draw[gray, thick] (310,82) -- (310,88);
\draw[gray, thick] (160,85) -- (310,85);
\draw (235,103)node[below] {\footnotesize $2L_1$};

\draw[line width=.03cm,gray,dashed] (85,-25) rectangle (145,35);
\draw[line width=0.05cm,red] (-40,0) .. controls (20,-60) and (55,-10) .. (85,-50);
\draw[line width=0.05cm,red] (85,-50) .. controls (105,-60) and (135,-10) .. (125,0);
\draw[line width=0.05cm,red] (125,0) .. controls (135,60) and (105,40) .. (85,60);
\draw[line width=0.05cm,red] (85,60) .. controls (65,75) and (10,120) .. (-40,0);
\draw[line width=0.05cm,black] (25,5) .. controls (25,5) and (25,5) .. (40,5);

\begin{scope}[shift={(-33,0)}]
\begin{scope}[shift={(0,50)}]
\draw[line width=.03cm] (10,20) rectangle (10,20);
\draw[line width=.03cm] (10,20) rectangle (10,20);
\end{scope}
\end{scope}
\end{tikzpicture}
\caption{A circuit of closed dual edges creates a path connecting the box $B_{L_0}(x)$ to the boundary of the box $B_{2L_0}(x)$ (dashed line), inducing the occurrence of $A_{L_0}(x)$.}
\label{fig_peierls}
\end{figure}
Divide $[0,1,2,\dots)$ into intervals $I_k=\{L_k,L_k+1,\dots,L_{k+1}\}$. Each of this intervals, say interval $I_k$, is partitioned into $L_{k+1}/L_k$ boxes of radius $L_k$. Let $x_{k,i}$ be the vertex at the center of the $i$-th box of the $k$-th scale. The crucial observation is that, if $|\mathcal{O}_{L_0}|<\infty$, then there exists an infinite closed dual circuit surrounding $\{0,\dots,L\}\times\{0\}$. This in turn implies the occurrence of the event $A_{L_k}(x)$ for some $x\in\displaystyle\bigcup_k\{x_{k,i}:i=1,2,\dots,\lfloor L_k^{1/2}\rfloor\}$. See Figure \ref{fig_peierls} for a sketch of this event.  Since for each $k$ there are $\frac{L_{k+1}}{L_k}$ boxes of size $L_k$, it follows that
\begin{equation*}
1-\mathbb{P}_{\rho,t}(|\mathcal{O}_{L_0}|=\infty)\leq \sum_{k\geq 0}\frac{L_{k+1}}{L_k}\mathbb{P}_{\rho,t}^*(L_k)\leq \sum_{k\geq 0}L_k^{-7/2}<1.
\end{equation*}
Last inequality follows by our choice of scales on (\ref{scale}) and the fact that $L_0\geq 25$. This completes the proof.
\qed

\section*{Acknowledgments}
The authors are grateful to two anonymous referees for their valuable comments and suggestions. The research of R\'{e}my Sanchis was partially supported by FAPEMIG (PPM 0600-16) and CNPq. Diogo dos Santos was partially supported by PNPD/CAPES. Roger Silva was partially supported by FAPEMIG (Edital Universal). This study was financed in part by the Coordena\c{c}\~{a}o de Aperfei\c{c}oamento de Pessoal de N\'{i}vel Superior, Brasil (CAPES), Finance Code 001.

\end{document}